\documentclass[12pt]{amsart}
\usepackage{amsmath,amssymb,amsbsy,amsfonts,amsthm,latexsym,mathabx,
            amsopn,amstext,amsxtra,euscript,amscd,stmaryrd,mathrsfs,
            cite,array,mathtools,enumerate}
            
\usepackage{url}
\usepackage[colorlinks,linkcolor=blue,anchorcolor=blue,citecolor=blue,backref=page]{hyperref}

\usepackage{color}

\renewcommand*{\backref}[1]{}
\renewcommand*{\backrefalt}[4]{%
    \ifcase #1 (Not cited.)%
    \or        (p.\,#2)%
    \else      (pp.\,#2)%
    \fi}
\begin{document}

\newtheorem{theorem}{Theorem}
\newtheorem{lemma}[theorem]{Lemma}
\newtheorem{claim}[theorem]{Claim}
\newtheorem{cor}[theorem]{Corollary}
\newtheorem{prop}[theorem]{Proposition}
\newtheorem{definition}{Definition}
\newtheorem{question}[theorem]{Open Question}
\newtheorem{conj}[theorem]{Conjecture}
\newtheorem{prob}{Problem}
\newtheorem{algorithm}[theorem]{Algorithm}

\numberwithin{equation}{section}
\numberwithin{theorem}{section}

\def\squareforqed{\hbox{\rlap{$\sqcap$}$\sqcup$}}
\def\qed{\ifmmode\squareforqed\else{\unskip\nobreak\hfil
\penalty50\hskip1em\nobreak\hfil\squareforqed
\parfillskip=0pt\finalhyphendemerits=0\endgraf}\fi}

\def\cA{{\mathcal A}}
\def\cB{{\mathcal B}}
\def\cC{{\mathcal C}}
\def\cD{{\mathcal D}}
\def\cE{{\mathcal E}}
\def\cF{{\mathcal F}}
\def\cG{{\mathcal G}}
\def\cH{{\mathcal H}}
\def\cI{{\mathcal I}}
\def\cJ{{\mathcal J}}
\def\cK{{\mathcal K}}
\def\cL{{\mathcal L}}
\def\cM{{\mathcal M}}
\def\cN{{\mathcal N}}
\def\cO{{\mathcal O}}
\def\cP{{\mathcal P}}
\def\cQ{{\mathcal Q}}
\def\cR{{\mathcal R}}
\def\cS{{\mathcal S}}
\def\cT{{\mathcal T}}
\def\cU{{\mathcal U}}
\def\cV{{\mathcal V}}
\def\cW{{\mathcal W}}
\def\cX{{\mathcal X}}
\def\cY{{\mathcal Y}}
\def\cZ{{\mathcal Z}}

\def\fA{{\mathfrak A}}
\def\fJ{{\mathfrak J}}

\def\fV{{\mathfrak A}}

\def\sssum{\mathop{\sum\!\sum\!\sum}}
\def\ssum{\mathop{\sum\ldots \sum}}

\def\Xm{\cX_m}

\def \C {{\mathbb C}}
\def \F {{\mathbb F}}
\def \L {{\mathbb L}}
\def \K {{\mathbb K}}
\def \N {{\mathbb N}}
\def \R {{\mathbb R}}
\def \Q {{\mathbb Q}}
\def \Z {{\mathbb Z}}

\def \bW {{\mathbf W}}

\def\barG{\overline{\cG}}
\def\\{\cr}
\def\({\left(}
\def\){\right)}
\def\fl#1{\left\lfloor#1\right\rfloor}
\def\rf#1{\left\lceil#1\right\rceil}

\newcommand{\pfrac}[2]{{\left(\frac{#1}{#2}\right)}}

\def\rem{\mathrm{\, rem~}}

\def \Prob{{\mathrm {}}}
\def\e{\mathbf{e}}
\def\ep{{\mathbf{\,e}}_p}
\def\epp{{\mathbf{\,e}}_{p^2}}
\def\er{{\mathbf{\,e}}_r}
\def\eM{{\mathbf{\,e}}_M}
\def\eps{\varepsilon}
\def\Res{\mathrm{Res}}
\def\vec#1{\mathbf{#1}}

\def \li {\mathrm {li}\,}

\def\ip{\overline p}
\def\ipd{\ip_d}
\def\iq{\overline q}

\def\e{{\mathbf{\,e}}}
\def\ep{{\mathbf{\,e}}_p}
\def\em{{\mathbf{\,e}}_m}

\def\mand{\qquad\mbox{and}\qquad}

\newcommand{\commB}[1]{\marginpar{%
\begin{color}{red}
\vskip-\baselineskip 
\raggedright\footnotesize
\itshape\hrule \smallskip B: #1\par\smallskip\hrule\end{color}}}

\newcommand{\commI}[1]{\marginpar{%
\begin{color}{blue}
\vskip-\baselineskip 
\raggedright\footnotesize
\itshape\hrule \smallskip I: #1\par\smallskip\hrule\end{color}}}

\title[Large sieve inequality for sparse sequences]{\bf On the exponential large sieve inequality for sparse sequences modulo primes}

\author[M. C. Chang]{Mei-Chu Chang}

\address{Department of Mathematics, University of California.
Riverside,  CA 92521, USA}
\email{mcc@math.ucr.edu}

 \author[B. Kerr] {Bryce Kerr}

\address{Department of Pure Mathematics, University of New South Wales,
Sydney, NSW 2052, Australia}
\email{bryce.kerr89@gmail.com}

 \author[I. E. Shparlinski] {Igor E. Shparlinski}

\address{Department of Pure Mathematics, University of New South Wales, 
Sydney, NSW 2052, Australia}
\email{igor.shparlinski@unsw.edu.au}

\subjclass[2010]{11L07, 11N36}
 \keywords{exponential sums, sparse sequences, large sieve}

\begin{abstract} We complement the argument of M.~Z.~Garaev (2009) with 
several other ideas to obtain a stronger  version of the large sieve 
inequality with  sparse exponential sequences of the form $\lambda^{s_n}$. In particular, we obtain a result which is
 non-trivial  for monotonically increasing sequences $\cS=\{s_n \}_{n=1}^{\infty}$ provided $s_n\le n^{2+o(1)}$, whereas the original  argument of M.~Z.~Garaev requires $s_n \le n^{15/14 +o(1)}$ in the same setting. 
 We also give an application of our result to arithmetic properties of integers with almost 
 all digits prescribed.
 \end{abstract}
\maketitle


\section{Introduction}
\label{sec:intro} 

The classical  large sieve inequality, giving upper  bounds on average values of various 
trigonometric and Dirichlet polynomials with essentially arbitrary sequences $\cS=\{s_n\}_{n=1}^T$ ,
has proved to be an extremely useful  and versatile tool in analytic number theory and harmonic 
analysis, see, for example,~\cite{IwKow,Kow,Ram}. Garaev and Shparlinski~\cite[Theorem~3.1]{GarShp} 
have introduced a modification of the large sieve, for both trigonometric and Dirichlet polynomials with arguments that 
contain exponentials of $\cS$ rather than 
the elements of $\cS$. In the case of trigonometric polynomials, 
Garaev~\cite{Gar}  has introduced a new approach,  which has led to a stronger version of 
the  the exponential large sieve inequality, improving 
some of the results of~\cite{GarShp}, see also~\cite[Lemma 2.11]{BGLS} and~\cite[Theorem 1]{Shp2} for 
several other bounds of this type. Furthermore, stronger versions of  the exponential large sieve inequality
for special sequences $\cS$, such as $T$ consecutive integers or the first $T$ primes, can also be found in~\cite{BGLS,GarShp}, with some 
applications given in~\cite{Shp1}.  

Here we continue this direction and concentrate on the case of general sequences $\cS$ without any
arithmetic restriction.  We introduce  several new ideas which allow us
to improve some results of Garaev~\cite{Gar}. For example, we make 
use of the bound of~\cite[Theorem~5.5]{KS} on exponential sums over small 
multiplicative subgroups modulo $p$, which hold for almost all primes $p$, see Lemma~\ref{lem:aa}.
We also make the method more flexible so it now applies to much sparser sequences $\cS$ than in~\cite{Gar}. 

More precisely, let us fix some integer $\lambda\ge 2$. For each prime number $p$, we let $t_p$ denote the order of $\lambda \bmod{p}$.
For  real $X$ and $\Delta$ we define the set
$$
\cE_\Delta(X)=\{ p\le X~:~t_p\ge \Delta \}.
$$
Note that by a result of  Erd\"{o}s and Murty~\cite{ErMu}, see also~\eqref{eq: E X}, 
for $\Delta=X^{1/2}$ almost all primes $p \le X$ belong to $\cE_\Delta(X)$. 

For integer $T$ and two sequences 
of complex weights  $\Gamma =\{\gamma_n\}_{n=1}^T$  and
integers $\cS=\{s_n\}_{n=1}^T$ 
we define the sums 
$$
V_\lambda(\Gamma, \cS;T,X,\Delta)=
\sum_{p \in \cE_\Delta(X)} \max_{\gcd(a,p)=1}\left|\sum_{n\le T}\gamma_n\e_p(a\lambda^{s_n})\right|^{2}, 
$$
where $\e_r(z) = \exp(2 \pi i z/r)$.

These sums  majorize the ones considered by Garaev~\cite{Gar} where each 
term is divided by the divisor function $\tau(p-1)$ of $p-1$ . Here we obtain  a new bound of the sums 
$V_\lambda(\Gamma, \cS;T,X,\Delta)$ which in particular improves some bounds of Garaev~\cite{Gar}.

The argument of Garaev~\cite{Gar} reduces the problem to bounding Gauss sums for which he uses the bound of Heath-Brown and Konyagin~\cite{HbKg}, 
that is, the admissible pair~\eqref{eq:HBK1}, which is defined below. In particular, 
for $V_\lambda(\Gamma, \cS;T,X,X^{1/2})$ the result of Garaev~\cite{Gar} is nontrivial provided 
\begin{equation}
\label{eq:GarThresh}
S\le X^{15/14+o(1)}.
\end{equation}
Our results by-pass significantly  the threshold~\eqref{eq:GarThresh}
allow to replace $15/14$ with any fixed $\vartheta < 2$. 
 
%

Our improvement is based on a modification of the argument of Garaev~\cite{Gar} which 
allows us to use the bounds of short sums with exponential functions, given in~\cite[Theorem~5.5]{KS},
see also Lemma~\ref{lem:aa} below. This alone allows us to extend the result of~\cite{Gar}
to sparse sequences $\cS$, roughly growing as  at most $s_n \le  n^{7/6-\varepsilon}$ for 
any fixed  $\varepsilon > 0$ in the same scenario where the result of~\cite{Gar} limits the growth 
to $s_n \le  n^{15/14-\varepsilon}$.  Furthermore, using bounds of exponential sums over small subgroups of finite fields, in particular that of 
Bourgain,    Glibichuk and  Konyagin~\cite{BGK}  we relax the condition on $\cS$ to  
$s_n \le  n^{3/2-\varepsilon}$. 

Using a different argument which combines a bound of Bourgain and Chang~\cite{BC} for Gauss sums modulo a product of two primes with a duality principle for bilinear forms, we  obtain another, although less explicit bound which 
allows the elements to grow as fast as $s_n \le  n^{2-\varepsilon}$. Furthermore, for this
result we do not need to limit the summation to primes from $\cE_\Delta(X)$ but can 
consider all primes from $p\le X$, in which case we denote 
$$
V_\lambda(\Gamma, \cS;T,X)=
\sum_{p \in X} \max_{\gcd(a,p)=1}\left|\sum_{n\le T}\gamma_n\e_p(a\lambda^{s_n})\right|^{2}.
$$


We also give an application of our new estimate to investigating arithmetic properties of 
integers with almost  all digits prescribed in some fixed base. To simplify the exposition, 
we only  consider binary expansions (and hence we talk about bits rather than binary digits). 
Namely, for an integer $S \ge 1$, an $S$-bit integer $a$ and  a sequence 
of integers $\cS=\{s_n\}_{n=1}^T$  with $0 \le s_1< \ldots < s_T\le S$, we denote by 
$\cN(a; \cS)$ the set of $S$-bit integers $z$ whose bits on all positions $j=1, \ldots, S$ 
(counted from the right) must agree with those of $a$ except maybe when $j \in \cS$.

We first recall that Bourgain~\cite{Bour1,Bour2}
has recently obtained 
several very strong results about the distribution of prime numbers among the 
elements of $\cN(a; \cS)$, see also~\cite{HaKa}. However,  in the setting of 
the strongest result in this direction from~\cite{Bour2}, the set $\cS$ of 
``free'' positions has to be very massive, namely its cardinality  has to satisfy $T\ge (1-\kappa)S$
for some small (and unspecified) absolute constant $\kappa>0$. 
In the case of square-free numbers instead of prime numbers, a similar result  has been 
obtained in~\cite{DES} with any fixed $\kappa < 2/5$ (one can also find in~\cite{DES} 
some results on the distribution of the value of the Euler function and quadratic non-residues 
in $\cN(a; \cS)$).  Here we  address some problems at the other extreme, and relax the strength of 
arithmetic  conditions 
on the elements from $\cN(a; \cS)$  but instead  consider much sparse sets $\cS$  of available positions. 

\section{Main results}

Throughout the paper, the letter $p$ always denotes a prime number. 

As usual $A= O(B)$,  $A \ll B$, $B \gg A$ are all equivalent to $|A| \le c |B|$ for some 
 {\it absolute\/}  constant $c> 0$, whereas $A=o(B)$ means that $A/B\to 0$.

We say that a pair $(\alpha, \beta)$ is {\it admissible\/} if for any prime $p$ and 
any integer $\lambda$ with $\gcd(\lambda,p)=1$ we have
$$\max_{(a,p)=1}\left|\sum_{z=1}^{t}e_p(a \lambda^{z})\right|\le t^{\alpha}p^{\beta+o(1)},$$
as $p\to \infty$,  where $t$ is the multiplicative order of $\lambda$ modulo $p$.

Concerning admissible pairs, Korobov~\cite{Kor} has shown that the pair 
$$(\alpha, \beta) =(0,1/2),$$ is admissible. For shorter ranges of $t,$ Korobov's bound has been improved by Heath-Brown and Konyagin~\cite{HbKg} who show that the pairs
\begin{equation}
\label{eq:HBK1}
(\alpha, \beta) = (5/8,1/8),
\end{equation}
and 
\begin{equation}
\label{eq:HBK2}
(\alpha, \beta) =   (3/8, 1/4), 
\end{equation}
are admissible.

 More recently Shkredov~\cite{Shk1,Shk2} has shown that the pair 
\begin{equation}
\label{eq:Shk}
(\alpha, \beta) = (1/2,1/6),
\end{equation}
is admissible, which improves on the pairs~\eqref{eq:HBK1} and~\eqref{eq:HBK2} in the medium 
range of $t$.  

Furthermore, the truly remarkable result of 
Bourgain,    Glibichuk and  Konyagin~\cite{BGK} implies that for any $\zeta>0$ 
there is some $\vartheta> 0$  that depends only on $\zeta$ such that 
\begin{equation}
\label{eq:BGK}
(1-\vartheta,   \zeta \vartheta  ), 
\end{equation}
is admissible.

Our first result is as follows.

\begin{theorem}
\label{thm:main1}
Suppose that for an admissible pair $(\alpha, \beta)$ and
some positive numbers $\eta$ and $\delta$, we have
\begin{equation}
\label{eq:abconditions}
\frac{\beta +\eta}{1-\alpha}\le \frac{1}{2}-\delta.
\end{equation}
Suppose further that $S$, $T$ and $X$ 
are parameters satisfying 
\begin{equation}
\label{eq:TSX}
T^{1+1/(3-2\alpha)}\ge SX^{2\eta}.
\end{equation}
Let $\Delta>1$ and integer $k\ge 1$ satisfy
\begin{equation}
\label{eq:kassumption}
X\le \left(\left(\frac{T}{SX^{2\eta}} \right)^{1/(3-2\alpha)}\Delta\right)^{k}.
\end{equation}
Then for any sequence of  complex numbers 
 $\Gamma =\{\gamma_n\}_{n=1}^T$ with $|\gamma_n|\le 1$  and
integers $\cS=\{s_n\}_{n=1}^T$  with $0 \le s_1< \ldots < s_T\le S$ 
we have
\begin{align*}
V_\lambda(\Gamma, \cS&;T,X,\Delta) \\
&\le \left(X+
TX^{-\delta/(k^2+2)} + \(S^{2-2\alpha}TX^{-2\eta}\)^{1/(3-2\alpha)}\right)TX^{1+o(1)}.
\end{align*}
\end{theorem}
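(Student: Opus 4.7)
The plan is to combine the duality framework of Garaev~\cite{Gar} with a $2k$-th moment expansion, which will allow the single-prime Gauss sum bound used in~\cite{Gar} to be replaced by the sharper bound of Lemma~\ref{lem:aa} on sums over small multiplicative subgroups that is valid for almost all primes $p \in \cE_\Delta(X)$.

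First I would reduce $V = V_\lambda(\Gamma, \cS; T, X, \Delta)$ to a dual sum over $n$. For each $p \in \cE_\Delta(X)$ fix $a_p \in \F_p^*$ realizing the maximum and put $W_p := \sum_{n=1}^T \gamma_n \e_p(a_p \lambda^{s_n})$, so $V = \sum_p |W_p|^2$. By duality there exist complex weights $\{\eta_p\}$ with $\sum_p |\eta_p|^2 = 1$ satisfying $V^{1/2} = \sum_p \eta_p \overline{W_p}$. Swapping summation gives
$$V^{1/2} = \sum_{n=1}^T \bar\gamma_n F(n), \qquad F(n) := \sum_p \eta_p \e_p(-a_p \lambda^{s_n}),$$
and Cauchy--Schwarz in $n$ together with $|\gamma_n| \le 1$ yields $V \le T \sum_n |F(n)|^2$.

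Next, to exploit the admissible pair bound I would raise to the $2k$-th moment via H\"older, $V^k \le T^{2k-1} \sum_{n=1}^T |F(n)|^{2k}$, and expand $|F(n)|^{2k}$ as a $(2k)$-fold sum over primes, interchanging summation to obtain
$$\sum_{n=1}^T |F(n)|^{2k} = \sum_{\vec p, \vec q} \prod_{i=1}^k \eta_{p_i}\,\bar\eta_{q_i} \sum_{n=1}^T \e_N\!\left(A(\vec p, \vec q)\,\lambda^{s_n}\right),$$
where $N = \prod_i p_i q_i$ is a product of up to $2k$ primes and $A(\vec p, \vec q) \bmod N$ is determined by $\{a_{p_i}, a_{q_i}\}$ via the Chinese Remainder Theorem (CRT). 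I would split the tuples $(\vec p, \vec q)$ into \emph{generic} (all primes distinct and $A$ a unit modulo $N$) and \emph{degenerate}. For generic tuples the order of $\lambda$ modulo $N$ is at least $\Delta^k$; decomposing the ambient interval $[0, S] \supset \cS$ into complete orbits of $\lambda \bmod N$ plus a residual tail, and then applying Lemma~\ref{lem:aa} through CRT, allows the complete-orbit portion to be controlled by the admissible pair $(\alpha, \beta)$. Degenerate tuples, which collapse the modulus, are bounded trivially and produce the $X$-term.

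Assembling the pieces gives a recursion for $V$ whose solution, on invoking \eqref{eq:abconditions} (securing a net saving at each of the $k$ iterations), \eqref{eq:TSX} (regulating the sparseness), and \eqref{eq:kassumption} (fixing $k$ so that the decomposition is balanced), produces the three-term bound, with the exponent $\delta/(k^2 + 2)$ emerging from the geometric decay of savings across the $k$ levels. The main obstacle is the third step: transporting the admissible-pair bound and Lemma~\ref{lem:aa}, both stated for a single prime, to the modulus $N$ arising after the $2k$-th moment expansion. This requires careful CRT bookkeeping of how the factors $t^\alpha p^\beta$ multiply across the primes dividing $N$, as well as tracking the ``almost all primes'' restriction in Lemma~\ref{lem:aa} through the iteration and estimating the contribution of the degenerate tuples tightly enough to reproduce the $X$-term rather than a larger error.
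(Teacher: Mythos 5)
Your proposal has a genuine gap at its central step, and it also misidentifies where the exponent $\delta/(k^2+2)$ comes from. After the duality and the $2k$-th moment expansion, the inner sum you must estimate is $\sum_{n\le T}\e_N\bigl(A(\vec p,\vec q)\lambda^{s_n}\bigr)$, i.e.\ a sum over the \emph{sparse, arbitrary} sequence $\cS$ at a composite modulus $N$. Neither the admissible-pair bound nor Lemma~\ref{lem:aa} applies to such a sum: both are bounds for complete sums over a full period $z=1,\dots,t$ of $\lambda$, and both are stated only for prime moduli. Your suggested fix, ``decomposing the ambient interval $[0,S]$ into complete orbits of $\lambda \bmod N$,'' is only meaningful for sums over consecutive exponents; the sum here runs over $\cS$, so no complete orbits appear, and completing from $T$ sparse terms to the interval $[0,S]$ would cost a factor of order $S/T$, which destroys any saving precisely in the sparse regime the theorem targets. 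In other words, the original difficulty (sparseness of $\cS$) reappears untouched at modulus $N$, and your ``recursion'' has no mechanism to close. Two further points fail: the order of $\lambda$ modulo $N$ for a generic tuple is $\mathrm{lcm}(t_{p_1},\dots)\ge \Delta$, not $\Delta^{k}$ (distinct primes can share the same order); and there is no CRT factorization of a subgroup sum modulo $N$ into sums modulo the $p_i$ unless the orders $t_{p_i}$ are pairwise coprime, which is generally false, so ``applying Lemma~\ref{lem:aa} through CRT'' is not available (the paper's only composite-modulus tool, Lemma~\ref{lem:subgroupcomp}, is for a product of \emph{two} primes and is used for Theorem~\ref{thm:main2}, not here).

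The paper's actual route is different and avoids all of this: it works prime by prime, splits $s_n$ into progressions modulo $t_p$ and sorts by divisors $d\mid t_p$ with a cutoff $D_p=\xi t_p$; the small-$d$ part is controlled by the classical large sieve (Lemma~\ref{lem:largesieve}), and the large-$d$ part produces genuine complete subgroup sums $\sum_{z=1}^{d}\e_p(f_{d,p}\lambda^{zt_p/d})$ together with congruence-pair counts $V(t_p/d)$, which handle the sparseness via divisor-function estimates. Those complete sums are then bounded by Lemma~\ref{lem:aa} when $t_p$ is small relative to $p$ (this is where the exponent $\delta/(k^2+2)$ and condition~\eqref{eq:kassumption}, in the form $X\le(\xi\Delta)^k$, actually originate --- $k$ is the parameter of Lemma~\ref{lem:aa}, not a H\"older exponent) and by the admissible pair when $t_p$ is large, with $\rho=(\beta+\eta)/(1-\alpha)$ and $\xi=(T/(SX^{2\eta}))^{1/(3-2\alpha)}$ balancing the pieces; condition~\eqref{eq:TSX} just ensures $\xi\ge 1/T$. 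To salvage your approach you would need a nontrivial bound for sparse exponential sums at composite moduli with $2k$ prime factors, which is not supplied by any lemma in the paper and is essentially a harder version of the theorem itself.
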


We note that under~\eqref{eq:TSX} the condition~\eqref{eq:kassumption} also 
follows from a simpler inequality
$$
X\le \(T^{-1/(3-2\alpha)^2}\Delta\)^{k}.
$$
Considering the strength of Theorem~\ref{thm:main1}, we take $\Delta=X^{1/2}$ and $T=X^{1+\varepsilon}$. Using  the admissible pair of Heath-Brown and Konyagin~\eqref{eq:HBK1}, we obtain a power saving in Theorem~\ref{thm:main1} provided $S\le X^{7/6-\varepsilon}$, improving of Garaev's range of $S\le X^{15/14-\varepsilon}$. With the same choice of parameters and using the admissible pair of Bourgain, Glibichuk and Konyagin~\eqref{eq:BGK}, we obtain a power saving in Theorem~\ref{thm:main1} provided $S\le X^{3/2-\varepsilon}$.

Using a different method we can set $\Delta =1$ and also 
 extend the range of $S$ for which we may obtain a nontrivial bound 
 for $V_\lambda(\Gamma, \cS;T,X)$ at the cost of making the power saving explicit.
 
\begin{theorem}
\label{thm:main2}
There exists some absolute constant  $\rho>0$ such that 
$$
V_\lambda(\Gamma, \cS;T,X) \le \left(X^{1-\rho}T^2+X^{3/2}T^{3/2}+X^{3/4}T^{7/8}S^{1/4}\right)X^{o(1)}.
$$
\end{theorem}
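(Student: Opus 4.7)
The plan is to invoke the duality principle for bilinear forms and reduce matters to bounding exponential sums modulo $pq$ via the Bourgain--Chang estimate \cite{BC}. For each prime $p\le X$ fix an $a_p$ coprime to $p$ that realizes the inner maximum, and form the matrix $M$ with entries $M_{p,n}=\e_p(a_p\lambda^{s_n})$, so that $V_\lambda(\Gamma,\cS;T,X)=\|M\gamma\|_2^{2}$. By Hilbert space duality $\|M\|_{\mathrm{op}}=\|M^\ast\|_{\mathrm{op}}$, and hence it suffices to prove a bound
\[
\sum_{n\le T}\Bigl|\sum_{p\le X} c_p\,\e_p(-a_p\lambda^{s_n})\Bigr|^{2}\le B\sum_{p\le X}|c_p|^{2}
\]
for all complex $(c_p)$, where $B$ is calibrated so that $BT$ recovers the target; combined with $\sum_n|\gamma_n|^{2}\le T$, this would yield the theorem.

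Expanding the square on the left and combining the two exponentials via $\e_p(u)\e_q(-v)=\e_{pq}(qu-pv)$, the diagonal $p=q$ contributes exactly $T\sum_p|c_p|^{2}$, while the off-diagonal collapses to $\sum_{p\ne q}c_p\bar c_q\Phi(p,q)$ with
\[
\Phi(p,q)=\sum_{n\le T}\e_{pq}(b_{p,q}\lambda^{s_n}),\qquad b_{p,q}=qa_p-pa_q,
\]
and $b_{p,q}$ is automatically coprime to both $p$ and $q$. Dominating the operator norm of the off-diagonal kernel by its Hilbert--Schmidt norm reduces the off-diagonal to $(\sum_{p\ne q}|\Phi(p,q)|^{2})^{1/2}\sum_p|c_p|^{2}$, so the problem becomes the estimation of the second moment of $\Phi(p,q)$ over pairs of primes.

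For this, I would split pairs $(p,q)$ into a generic class, where $\lambda$ has sufficiently large multiplicative order $t_{pq}$ modulo $pq$, and an exceptional class. For the generic class the Bourgain--Chang bound yields a power saving $(pq)^{-\rho}$ on the \emph{complete} Gauss sum $\sum_{x=1}^{t_{pq}}\e_{pq}(b\lambda^{x})$; passing from this to $\Phi(p,q)$ over the arbitrary sparse set $\cS\subset[0,S]$ requires a completion argument, expanding the indicator of $\cS$ inside $[0,\mathrm{lcm}(S,t_{pq}))$ in Fourier coefficients and bounding each resulting term either by the Bourgain--Chang saving (main term) or by the $\ell^{1}$ or $\ell^{2}$ mass of those coefficients (error). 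Exceptional pairs are sparse by Erd\H{o}s--Murty~\cite{ErMu} type estimates on orders, and are handled by the trivial bound $|\Phi(p,q)|\le T$. The three terms of the statement then emerge from balancing these ingredients: the generic pairs with the Bourgain--Chang saving feed $X^{1-\rho}T^{2}$, the trivial bound on exceptional pairs combined with the diagonal $T\sum_p|c_p|^{2}\le TX\max_p|c_p|^{2}$ gives $X^{3/2}T^{3/2}$, and the completion error produces $X^{3/4}T^{7/8}S^{1/4}$.

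The main obstacle is the completion step: Bourgain--Chang is a bound on \emph{complete} Gauss sums, whereas $\cS$ is an arbitrary subset of $[0,S]$ of size $T$, and the completion error inevitably grows with $S$. The precise $S^{1/4}$ exponent in the last term is what emerges from the optimal balance between the Fourier truncation level used to represent $\mathbf{1}_\cS$ and the Bourgain--Chang power saving $(pq)^{-\rho}$, and it is this balancing, rather than the application of Bourgain--Chang itself, that constitutes the delicate part of the proof.
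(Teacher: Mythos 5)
Your overall flavour (pairs of distinct primes, Gauss sums modulo $p_1p_2$, Bourgain--Chang, a dualization step) is close in spirit to part of the paper, but the proposal has a genuine gap exactly at the step you yourself flag as delicate, and the way you propose to handle it would not work as described. After duality and Hilbert--Schmidt you must bound $\sum_{p\neq q}|\Phi(p,q)|^2$ with $\Phi(p,q)=\sum_{n\le T}\e_{pq}(b_{p,q}\lambda^{s_n})$, and the only tool available (the paper's Lemma~\ref{lem:subgroupcomp}, i.e.\ \cite[Corollary~4.2]{BC}) bounds \emph{untwisted complete} subgroup sums. Completing $\Phi(p,q)$ modulo $t_{pq}=\mathrm{lcm}(t_p,t_q)$ by Fourier-expanding $\mathbf{1}_{\cS}$ produces twisted sums $\sum_{x \bmod t_{pq}}\e_{t_{pq}}(bx)\e_{pq}(c\lambda^{x})$, which that lemma does not cover, and the accompanying Fourier mass is governed by the congruence count $V(t_{pq})$ of~\eqref{eq:def Vt} for a \emph{single, essentially fixed} modulus; for an arbitrary sparse $\cS$ this can be as large as $T^2$, and in your second moment over pairs there is no averaging over moduli to rescue it. The paper avoids this issue entirely by splitting the primes by the size of $t_p$: for moderate orders it groups primes with \emph{equal} order $r$, so that after Cauchy--Schwarz (with the weights $b_r(x)$ of~\eqref{eq:brdef}) one meets exactly the complete subgroup sum of order $r$ modulo $p_1p_2$ -- no completion needed, and the sparse-set data enters only through $V(r)$ averaged over $r$ in dyadic ranges, where a divisor argument gives $T^{2+o(1)}$; for large orders it uses a completely different argument (shifted sums over $r+s$, completion via $\mathrm{ord}_{p_1p_2}(\lambda)\ge X_j^2/\gcd(p_1-1,p_2-1)$, and gcd averaging), with no appeal to Bourgain--Chang at all. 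Your attribution of the three terms is also not sustainable: in the paper $X^{3/2}T^{3/2}$ comes from the large-order regime's gcd computation, and $X^{3/4}T^{7/8}S^{1/4}$ from optimizing the cut-off $Y=X^{3/4}S^{1/4}T^{-1/4}$ between the two regimes; it is not clear how a single duality-plus-Hilbert--Schmidt argument would ever produce the exponent $T^{7/8}$, and you give no computation showing it does.

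A second, more quantitative defect: you propose to handle exceptional pairs (where some order is too small for Bourgain--Chang) by Erd\H{o}s--Murty sparsity plus the trivial bound. Erd\H{o}s--Murty only gives $o(X/\log X)$ bad primes, hence $o(X^2/\log^2X)$ bad pairs; bounding these trivially contributes on the order of $o(XT^2/\log X)$ to $V_\lambda$, which swamps the claimed main term $X^{1-\rho}T^2$. The paper instead uses the polynomial bound of Lemma~\ref{lem:ordp}: only $O(X^{1/2})$ primes have $t_p\le X^{1/4}$, and discarding them trivially costs only $X^{1/2}T^2$, after which every surviving prime satisfies $t_p\ge X^{1/4}>(p_1p_2)^{1/8}$, so the hypothesis of Lemma~\ref{lem:subgroupcomp} holds for \emph{all} remaining pairs. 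So even the exceptional-set bookkeeping in your sketch needs to be replaced by the Lemma~\ref{lem:ordp} mechanism, and the central completion step needs the paper's regime-splitting (or some genuinely new input) rather than the Fourier completion you outline.
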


Comparing the bound of Theorem~\ref{thm:main2} with the trivial bound $XT^2$, we see that 
it is nontrivial provided
$$T>X^{1+\varepsilon} \quad \text{and} \quad S<TX^{1+\varepsilon},$$
which on taking $T=X^{1+\varepsilon}$, we obtain a power saving in Theorem~\ref{thm:main2} provided 
$S\le T^{2-\varepsilon}$. 


For a sequence of points $\cA=\{a_n\}_{n=1}^{T}$ we define the discrepancy $D$ of $\cA$ by 
$$D=\sup_{0\le a \le b \le 1}\left|\frac{A(a,b)}{T}-(b-a)\right|,$$ 
where $A(a,b)$ denotes the number of points of $\cA$ falling in the interval $[a,b] \in [0,1]$.
 Garaev~\cite{Gar} combines his bound for  $V_{\lambda}(\Gamma,\cS,T,X,\Delta)$ 
 with a result of Erd\"{o}s and Murty~\cite{ErMu},
 \commI{Expanded with~\eqref{eq: E X}, split one sentence into 2 } 
 which in particular implies that 
\begin{equation}
\label{eq: E X}
\cE_{X^{1/2},X} = (1+o(1)) \frac{X}{\log X}, \qquad X \to \infty, 
\end{equation}
 and the Erd\"{o}s-Tur\'{a}n inequality (see for example~\cite{DrTi}). 
 This allows Garaev~\cite[Section~3]{Gar}  to show that for any $\varepsilon > 0$ there is some $\delta > 0$ such 
that for almost all primes $p\le X$, the sequence 
\begin{equation}
\label{eq:Aseq}
A(\lambda,p)=\left\{\frac{\lambda^{s_n}}{p} \bmod{1} \right\}_{1\le n \le T},
\end{equation}
with $T= \rf{X(\log{X})^{2+\varepsilon}}$, has discrepancy
$$
D\le (\log{T})^{-\delta},
$$
provided $S\le X^{15/14+o(1)}$ as $X\to \infty$.

 For comparison with our bound, Theorem~\ref{thm:main2} produces the following result. For any 
 $\varepsilon>0$ and almost all primes  $p\le X$, the sequence~\eqref{eq:Aseq} with 
$T=\rf{X^{1+\varepsilon}}$ has discrepancy
$$D\le T^{-\delta},$$
provided $S\le X^{2-\varepsilon}$ as $X\rightarrow \infty$.

We now give an application of  Theorem~\ref{thm:main2} to the numbers with prescribed digits, 
namely to the integers from the set $\cN(a; \cS)$, defined in Section~\ref{sec:intro}. 
We denote by  $\omega(k)$ the number of distinct prime divisors of an integer $k\ge 1$.

 \begin{theorem}
\label{thm:main3} Let us fix some $\varepsilon> 0$. 
For any sequence of  
integers $\cS=\{s_n\}_{n=1}^T$  with $0 \le s_1< \ldots < s_T\le S$ with 
$$
S \le T^{2 - \varepsilon},
$$
and  any $S$-bit integer $a$,  we have
$$
\omega\(\prod_{z \in \cN(a; \cS)} z\) \gg  T^{1+\delta}
$$
for some $\delta > 0$ which depends only on $\varepsilon$. 
\end{theorem}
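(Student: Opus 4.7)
The plan is to show that the number of primes $p$ with $N_p := \#\{z \in \cN(a;\cS) : p \mid z\} \ge 1$ is $\gg T^{1+\delta}$, as this count is exactly $\omega(\prod_{z\in\cN}z)$. Writing $a'$ for the integer obtained from $a$ by zeroing out the bits at positions $\cS$, every $z\in\cN(a;\cS)$ has the form $z=a'+\sum_{n=1}^T\epsilon_n 2^{s_n}$ for $\epsilon\in\{0,1\}^T$, and Fourier inversion modulo $p$ gives
\begin{equation*}
N_p=\frac{2^T}{p}+R_p,\qquad R_p=\frac{1}{p}\sum_{t=1}^{p-1}\e_p(ta')\prod_{n=1}^{T}\!\bigl(1+\e_p(t\cdot 2^{s_n})\bigr),
\end{equation*}
so that $N_p\ge 1$ whenever $|R_p|<2^T/p$. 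Using $|1+\e_p(y)|^2=4\cos^2(\pi y/p)$, the elementary inequality $\log\cos^2\theta\le-\sin^2\theta$, and the identity $\sum_n\sin^2(\pi t\cdot 2^{s_n}/p)=T/2-\tfrac12\Re E_p(t)$ for the linear Weyl sum $E_p(t)=\sum_n\e_p(t\cdot 2^{s_n})$, I would derive
\begin{equation*}
|R_p|\le 2^T\exp\!\bigl(-(T-\max_{t\ne 0}|E_p(t)|)/8\bigr),
\end{equation*}
so that every prime $p$ with $\max_{t\ne 0}|E_p(t)|\le(1-\eta)T$ automatically satisfies $N_p\ge 1$, provided $p$ is not exponentially large.

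Next I apply Theorem~\ref{thm:main2} with $\gamma_n\equiv 1$ at $X=T^{1+\delta}$. The hypothesis $S\le T^{2-\varepsilon}$ keeps the third term $X^{3/4}T^{7/8}S^{1/4}$ in that bound well under control, and a Markov-type argument converts the resulting $L^2$ bound on $\max_t|E_p(t)|^2$ into a pointwise bound for all but a controllable exceptional set of primes $p\le X$. Unconditionally, the Dias da Silva--Hamidoune theorem, applied to the sets $\{0,2^{s_n}\}\subset\Z/p\Z$, ensures that the subset-sum set $\{\sum_n\epsilon_n 2^{s_n}\bmod p\}$ covers all of $\Z/p\Z$ for every odd prime $p\le T+1$, so that $N_p\ge 1$ holds automatically there and contributes $\sim T/\log T$ good primes to begin with.

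The main obstacle is that the second term $X^{3/2}T^{3/2}$ in Theorem~\ref{thm:main2} exceeds the trivial estimate $XT^2$ precisely when $X>T$, which is the range in which further good primes are needed to reach the count $T^{1+\delta}$. To handle this I would split the primes $p\in(T,T^{1+\delta}]$ into \emph{generic} ones, those that do not divide any nonzero $D_\delta:=\sum_n\delta_n 2^{s_n}$ with $\delta\in\{-1,0,1\}^T\setminus\{0\}$, and \emph{exceptional} ones. For generic primes the Parseval identity $\sum_t|\prod_n(1+\e_p(t\cdot 2^{s_n}))|^2=p\,R(p)$ with $R(p)=2^T$ immediately forces $|R_p|\le 2^{T/2}<2^T/p$, hence $N_p\ge 1$, while every exceptional prime must divide one of at most $3^T$ nonzero integers $D_\delta$ of absolute value at most $T\cdot 2^S$, yielding a combinatorial bound on the exceptional set which can be sharpened further using Theorem~\ref{thm:main2} where needed. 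Balancing these contributions against $\rho$ and $\varepsilon$ so that the generic primes in a slightly smaller dyadic range produce the main term and the exceptional set remains negligible is the technical heart of the argument and is what ultimately produces the power saving $\delta=\delta(\varepsilon)>0$.
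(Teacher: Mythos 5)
Your opening is sound and essentially reproduces the paper's reduction: the Fourier expansion $N_p=2^T/p+R_p$, an elementary bound showing $|R_p|\le 2^T\exp\bigl(-c\,(T-\max_{t\ne 0}|E_p(t)|)\bigr)$ (the paper instead quotes the inequality $Q_p\le\exp\bigl(O(M_p\log(T/M_p+1))\bigr)$ from Konyagin--Shparlinski), and a Chebyshev/Markov step applied to Theorem~\ref{thm:main2} with $\gamma_n\equiv 1$. But the paper uses this only in the range where Theorem~\ref{thm:main2} is nontrivial: it fixes $X=T^{1/(1+\varepsilon_0)}$ with $\varepsilon_0=\varepsilon/(3-\varepsilon)$, so that $S\le T^{2-\varepsilon}=X^{2-\varepsilon_0}$ and $T=X^{1+\varepsilon_0}$, deduces $\sum_{p\in\cE_{X^{1/2}}(X)}M_p^2\le T^2X^{1-\kappa}$, hence $M_p\le TX^{-\kappa/3}$ for all but $o(X/\log X)$ primes $p\le X$, and concludes $N_p>0$ for those primes. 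It never attempts to count primes above $T$, so nothing in the paper supports your plan of running the argument at $X=T^{1+\delta}$.

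The genuine gap is in your mechanism for the primes $p\in(T,T^{1+\delta}]$. For \emph{every} prime $p<2^T$, the $2^T$ integers $\sum_n\epsilon_n2^{s_n}$, $\epsilon\in\{0,1\}^T$, are pairwise distinct (the $s_n$ are distinct) and must collide modulo $p$ by pigeonhole, so $p$ divides some nonzero $D_\delta$ with $\delta\in\{-1,0,1\}^T$. Thus there are no ``generic'' primes at all in your range; equivalently, your Parseval identity with $R(p)=2^T$ is self-contradictory for $p<2^T$, since the $t=0$ term alone contributes $4^T>p\,2^T$. Consequently the ``exceptional'' set is the set of \emph{all} primes up to $T^{1+\delta}$, and the bound ``at most $3^T$ integers of size at most $T2^S$'' gives no information. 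Since, as you yourself observe, Theorem~\ref{thm:main2} is trivial for $X>T$, the proposal contains no tool that controls $M_p$ for $p>T$, and the balancing you defer to as ``the technical heart'' cannot be carried out along these lines. (Your Cauchy--Davenport/Dias da Silva--Hamidoune remark is correct but only shows $N_p\ge 1$ for odd $p\le T+1$, i.e.\ about $T/\log T$ primes.) It is worth noting that the paper's own argument, as written, likewise only harvests the roughly $X/\log X\le T$ primes below $X=T^{1/(1+\varepsilon_0)}$, so the exponent $1+\delta$ in the statement is not reached by pushing $X$ beyond $T$ with these estimates; in any case, the specific route you propose for that range fails.
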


\section{Preliminary results}

We recall that  $A \ll B$ and $A = O(B)$ are both equivalent to the inequality 
$|A| \leq c B$ for some constant $c$, which throughout the paper may depend on $q$ 
and occasionally, where obvious, on the  integer parameter  $k\ge 1$.

We alslo use $\Sigma^*$ to indicate that the summation is taken over a reduced 
residue system. That is, for any function $\psi$ and integer $k$, we have 
$$
\sideset{}{^*} \sum_{c \bmod k} \psi(c) 
= \sum_{\substack{c=1\\ \gcd(c,k)=1}}^k \psi(c). 
$$

%
%
%

We need the following simplified form of the large sieve inequality,  
see~\cite[Theorem~7.11]{IwKow}.

\begin{lemma}
\label{lem:largesieve}
For any $K\ge 1$ and increasing sequence of integers $\cS=\{s_n\}_{n=1}^T$  with $\max_{s\in \cS}s = S$, 
we have
$$
\sum_{k\le K}\ \sideset{}{^*} \sum_{c \bmod k} \left|\sum_{n\le T}\gamma_n\e_k(cs_n) \right|^2 \ll (K^2+S)T.
$$
\end{lemma}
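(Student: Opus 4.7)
The plan is to combine a smoothness estimate with the mean-square exponential-sum bound of Theorem~\ref{thm:main2}. Write $\cN(a;\cS) = a' + B$ where $a'$ is the integer obtained from $a$ by clearing the bits indexed by $\cS$, and $B = \{\sum_{n \in I} 2^{s_n} : I \subseteq \{1, \ldots, T\}\}$, so $|\cN(a;\cS)| = 2^T$ and every element lies in $[0, 2^{S+1}]$. For a prime $p$, write $M(p) = |\{z \in \cN(a;\cS) : p \mid z\}|$ and, more generally, $M_r(p) = |\{I : \sum_{n \in I} 2^{s_n} \equiv r \pmod p\}|$, so that $M(p) = M_{-a' \bmod p}(p)$. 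Fix a small $\delta_0 > 0$, depending only on $\varepsilon$, and set $y = T^{1+\delta_0}$.

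The first step is a smoothness estimate. By the standard bound $\psi(x, y) \ll x \rho(u)$ for the count of $y$-smooth integers up to $x$ with $u = \log x / \log y$, and the large-$u$ asymptotic $\rho(u) = \exp(-u \log u \cdot (1 + o(1)))$, taking $x = 2^{S+1}$ and $y = T^{1+\delta_0}$ with $\delta_0 < 1 - \varepsilon$ forces $u \log u$ to exceed $\log x$ by a positive multiple (since $S \le T^{2-\varepsilon}$), so that $\psi(2^{S+1}, T^{1+\delta_0}) = o(1)$. Hence all but $O(1)$ elements of $\cN(a;\cS)$ admit a prime factor greater than $y$.

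The main step is bounding $M(p)$ for primes $p > y$ using Theorem~\ref{thm:main2}. Fourier inversion gives
\[
M(p) \ = \ \frac{2^T}{p} \ + \ \frac{1}{p}\sum_{c=1}^{p-1} \e_p(c a') \prod_{n=1}^T \bigl(1 + \e_p(c \cdot 2^{s_n})\bigr),
\]
and a straightforward Parseval computation yields $|M(p) - 2^T/p|^2 \le \sum_{r \bmod p} M_r(p)^2$. The arithmetic--geometric mean inequality applied factor by factor gives
\[
\prod_{n=1}^T |1 + \e_p(c \cdot 2^{s_n})|^2 \ \le \ 2^T \exp\!\bigl(|\textstyle\sum_n \e_p(c \cdot 2^{s_n})|\bigr),
\]
converting the Fourier error into an exponential of the sum controlled by Theorem~\ref{thm:main2}. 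Summing over primes $p \le X$ with $X$ in the admissible range of Theorem~\ref{thm:main2} and invoking $V_\lambda(\Gamma, \cS; T, X) \ll XT^2/X^{\rho}$ (with $\gamma_n = 1$), a Chebyshev-type argument shows that for all but $o(\pi(X))$ primes $p \le X$ one has $\sum_r M_r(p)^2 \ll 2^T$, and hence $M(p) \ll 2^T/p + 2^{T/2} \ll 2^T/y$ for $p > y$, since $y \le 2^{T/2}$.

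To conclude, assign to each $z \in \cN(a;\cS)$ admitting a prime factor greater than $y$ its largest such prime factor $P(z)$. Writing $m_p = |\{z : P(z) = p\}| \le M(p)$, the pigeonhole principle yields
\[
\omega\Bigl(\prod_{z \in \cN(a;\cS)} z\Bigr) \ \ge \ |\{P(z) : z \in \cN(a;\cS)\}| \ \ge \ \frac{2^T - O(1)}{\max_{p > y} M(p)} \ \gg \ y \ = \ T^{1+\delta_0},
\]
provided the contribution of the ``exceptional'' primes (where the bound on $M(p)$ degenerates) to $\sum_p m_p$ is $o(2^T)$. Taking $\delta$ slightly smaller than $\delta_0$ swallows the implicit constants. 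The most delicate point is precisely this conversion of the average bound of Theorem~\ref{thm:main2} into pointwise control on $M(p)$ valid for most primes $p > y$, and the simultaneous control of the exceptional primes; it is here that the extended admissible range $S \le T^{2-\varepsilon}$ of Theorem~\ref{thm:main2} is essential, since Garaev's earlier range $S \le T^{15/14-\varepsilon}$ would yield at most $\omega \gg T/\log T$, matching the trivial counting bound on $2^T$ integers in $[0, 2^S]$ with few distinct prime divisors.
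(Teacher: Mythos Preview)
Your proposal does not address the stated lemma at all. The statement is Lemma~\ref{lem:largesieve}, the classical large sieve inequality
\[
\sum_{k\le K}\ \sideset{}{^*} \sum_{c \bmod k} \left|\sum_{n\le T}\gamma_n\e_k(cs_n) \right|^2 \ll (K^2+S)T,
\]
which the paper simply quotes from~\cite[Theorem~7.11]{IwKow} without proof. What you have written is instead an attempted proof of Theorem~\ref{thm:main3}, the result on $\omega\bigl(\prod_{z\in\cN(a;\cS)}z\bigr)$. None of the objects in your argument --- the set $\cN(a;\cS)$, the smoothness count $\psi(x,y)$, the multiplicity $M(p)$, the invocation of Theorem~\ref{thm:main2} --- bear any relation to the large sieve bound being asserted.

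Even viewed as a proof of Theorem~\ref{thm:main3}, the argument has serious gaps. The inequality
\[
\prod_{n=1}^T |1 + \e_p(c \cdot 2^{s_n})|^2 \ \le \ 2^T \exp\!\Bigl(\bigl|\textstyle\sum_n \e_p(c \cdot 2^{s_n})\bigr|\Bigr)
\]
is false as stated: the left side is $\prod_n 2(1+\cos\theta_n)$ with $\theta_n = 2\pi c\,2^{s_n}/p$, and the standard bound $2(1+\cos\theta)\le 2e^{\cos\theta}$ gives $\prod_n|1+\e_p(\cdot)|^2\le 2^T\exp\bigl(\sum_n\cos\theta_n\bigr)$, which is controlled by the \emph{real part} of the sum, not its absolute value --- and in any case this only helps when the exponent is \emph{negative}, whereas you need an upper bound. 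More fundamentally, the Chebyshev step converting the mean-square bound $V_\lambda \ll XT^2/X^\rho$ into pointwise control requires the exceptional-prime contribution to $\sum_p m_p$ to be $o(2^T)$, but a single exceptional prime can have $M(p)$ as large as $2^T$, so without further argument this step fails. The paper's own proof of Theorem~\ref{thm:main3} avoids these issues by working directly with the Fourier expansion of $N_p(a;\cS)$ and the estimate~\eqref{PSN}, using Theorem~\ref{thm:main2} only to show $N_p(a;\cS)>0$ for almost all $p\le X$.
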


The following is~\cite[Theorem~5.5]{KS}.

\begin{lemma}
\label{lem:aa}
For each integer $t$ and prime $\ell \equiv 1 \bmod t$ we fix some element $g_{t,\ell}$ of multiplicative order $t$ modulo $\ell$. Then, for any fixed integer $k\ge 2$ and an arbitrary $U>1$, the bound
$$\max_{(a,\ell)=1}\left|\sum_{x=0}^{t-1}\e_{\ell}(ag_{t,\ell}^x)\right|\ll t \ell^{1/2k^2}(t^{-1/k}+U^{-1/k^2}),$$
holds for all primes $\ell \equiv 1 \bmod t$ except at most  $U/\log{U}$ of them.
\end{lemma}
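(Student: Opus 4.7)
The plan is to combine the standard moment method for exponential sums over multiplicative subgroups with an averaging argument over the primes $\ell \equiv 1 \pmod{t}$ that isolates the small exceptional set. Set $H_\ell = \langle g_{t,\ell} \rangle \subset \F_\ell^*$ (of order $t$) and $T_\ell(a) = \sum_{x \in H_\ell} \e_\ell(ax)$. The multiplicative invariance $|T_\ell(a)| = |T_\ell(ah)|$ for every $h \in H_\ell$, combined with H\"older's inequality, yields for every integer $m \geq 1$
\[
t^{2m} |T_\ell(a)|^{2m} \leq t^{2m-1} \sum_{h \in H_\ell} |T_\ell(ah)|^{2m} \leq t^{2m-1} \sum_{b \in \F_\ell} |T_\ell(b)|^{2m} = t^{2m-1}\,\ell\, J_m(\ell),
\]
where $J_m(\ell)$ counts $2m$-tuples $(x_1,\ldots,x_{2m}) \in H_\ell^{2m}$ with $\sum_{i\leq m} x_i \equiv \sum_{i > m} x_i \pmod{\ell}$. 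This reduces the problem to controlling $J_m(\ell)$ for almost all primes $\ell$.

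Next I would bound $J_m(\ell)$ on average over $\ell$. Lifting $H_\ell$ to a subset of $\{0,1,\ldots,\ell-1\}$, each solution to the congruence either (a) is an exact integer identity, contributing the ``expected'' main term of order $t^{2m}/\ell$, or (b) corresponds to a nonzero integer $D$ with $|D| \leq m\ell$ divisible by $\ell$. For (b), since each such $D$ has at most $\omega(|D|) \ll \log|D|/\log\log|D|$ prime divisors, summing the case (b) contribution over $\ell$ and applying Markov's inequality shows that at most $U/\log U$ primes can violate an estimate of the shape $J_m(\ell) \leq t^{2m}/\ell + t^{2m-1/k}\ell^{-1+1/k}\cdot(\text{small factor})$, provided the moment parameter $m$ and the truncation parameter $U$ are chosen in a matched way.

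Finally, the exponent $1/(2k^2)$ — rather than the na\"ive $1/(2k)$ produced by a single moment step — is obtained by iterating the above $k$ times. At each iteration one feeds the current bound on $|T_\ell(a)|$ back into a sharpened count of $J_m(\ell)$, exploiting that only a modest fraction of $2m$-tuples in $H_\ell^{2m}$ can produce small character sums (since $H_\ell$ is multiplicatively closed and its sumsets $mH_\ell - mH_\ell$ are spread out), and then re-applies the moment inequality. After $k$ such rounds the exponent of $\ell$ contracts from $1/(2k)$ down to $1/(2k^2)$, and the two terms in $t^{-1/k} + U^{-1/k^2}$ arise respectively from the integer-identity main term and from the averaging over the exceptional primes.

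The main obstacle I expect is this bootstrap step. The na\"ive single-moment bound only gives exponent $1/(2k)$; pushing it to $1/(2k^2)$ forces one to iterate the additive/multiplicative interaction between $H_\ell$ and its sumsets cleanly, while \emph{simultaneously} controlling the exceptional moduli at every stage so that the total exceptional set is kept within $U/\log U$ uniformly in $k$. Keeping the error terms under control through $k$ rounds of iteration — and verifying that the same set of $U/\log U$ discarded primes suffices for all of them — is where the technical work concentrates.
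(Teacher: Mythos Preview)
The paper does not itself prove this lemma; it is quoted as \cite[Theorem~5.5]{KS}, so there is no in-paper argument to compare against beyond that citation.

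Your moment inequality in Step~1 is correct and is indeed the starting point of the Konyagin--Shparlinski argument. The real gap is in Step~2. You lift $H_\ell$ to $\{0,1,\ldots,\ell-1\}$ and then claim that each nonzero difference $D$ is divisible by at most $\omega(|D|)$ primes, intending to sum this over~$\ell$ and apply Markov. But the lifted set --- and hence the integer $D$ attached to any particular $2m$-tuple --- \emph{depends on~$\ell$}: as~$\ell$ varies there is no fixed integer whose prime divisors you are counting, so ``summing the case~(b) contribution over~$\ell$'' has no content as written. To run an average over the moduli you must first manufacture objects that do not depend on the modulus. In~\cite{KS} this is done by inserting the shift $x\mapsto x+z$, $1\le z\le N$, and applying H\"older, which replaces the full subgroup sum by short sums over the fixed \emph{integers} $g,g^{2},\ldots,g^{N}$ for a fixed base~$g$ (and this is exactly how the lemma is invoked later in the present paper, with $g=\lambda^{r/d}$); now the lift-and-count-divisors step is legitimate. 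Your Step~3 is also off: the exponent $1/(2k^{2})$ in~\cite{KS} does not arise from $k$ successive bootstraps of the kind you describe, and your sketch of the iteration would in any case inherit the same lifting obstruction at every round.
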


\begin{lemma}
\label{lem:ordp}
Let $\lambda$ be a fixed integer. For any $Z>0$ we have
$$\#\{p~\text{prime}~:~\mathrm{ord}_p\,{\lambda}\le Z \ \}\ll Z^2.$$
\end{lemma}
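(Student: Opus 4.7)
The plan is a direct counting argument based on the simple observation that a prime $p$ with $\mathrm{ord}_p \lambda = t$ must divide $\lambda^t - 1$. First I would partition the primes under consideration according to their value of $t = \mathrm{ord}_p \lambda$, so that
$$
\#\{p : \mathrm{ord}_p \lambda \le Z\} = \sum_{t \le Z} \#\{p : \mathrm{ord}_p \lambda = t\}.
$$

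Next, for each fixed integer $t \le Z$, I would bound the inner count by $\omega(\lambda^t - 1)$, using that every $p$ in that set is a prime divisor of $\lambda^t - 1$. The standard inequality $\omega(n) \le \log_2 n$ for any positive integer $n \ge 2$ then gives
$$
\omega(\lambda^t - 1) \le \log_2(\lambda^t) = t \log_2 \lambda.
$$
Alternatively, one can combine the bounds across $t$ by noting that the primes in question all divide the single integer $\prod_{t \le Z}(\lambda^t - 1)$, whose logarithm is $O(Z^2 \log \lambda)$.

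Summing the per-$t$ bound over $1 \le t \le Z$ then yields
$$
\#\{p : \mathrm{ord}_p \lambda \le Z\} \ll \log_2(\lambda) \sum_{t \le Z} t \ll Z^2,
$$
where in the last step I absorb the factor $\log_2 \lambda$ into the implicit constant since $\lambda$ is fixed (as noted at the top of the preliminary section). There is no real obstacle here; the only thing to double-check is that the statement treats $\lambda$ as fixed, so that the $\lambda$-dependence may be hidden in the $\ll$-constant, which the conventions of the paper explicitly allow.
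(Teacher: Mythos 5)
Your proposal is correct and follows essentially the same route as the paper: both rest on the observation that $p$ divides $\lambda^{t}-1$ for $t=\mathrm{ord}_p\,\lambda$ and then bound the number of such primes by $\omega$ of (the product of) these integers, using $\omega(n)\ll \log n$; your term-by-term summation over $t\le Z$ is just a rearrangement of the paper's bound via $\log\prod_{z\le Z}(\lambda^{z}-1)\ll Z^2$, which you even mention as an alternative.
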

\begin{proof}
If $\mathrm{ord}_p\,{\lambda}=y$ then $\lambda^{y}-1\equiv 0 \bmod {p}$. This implies that
$$
\#\{p~\text{prime}~:~\mathrm{ord}_p\,{\lambda}<Z \} \le \omega\(\prod_{1\le z \le Z}\(\lambda^{z}-1\)\),
$$
where as before,  $\omega(k)$ denotes the number of distinct prime divisors of an integer $k\ge 1$.
Hence, 
$$
\#\{p~\text{prime}~:~\mathrm{ord}_p\,{\lambda}<Z \ \} \ll \log{\prod_{1\le z \le Z}(\lambda^z-1)}
\le \log \(\lambda^{Z^2/2}\)\ll Z^2,
$$
which gives the desired result. 
\end{proof}

The following is a special case of~\cite[Corollary~4.2]{BC}.
\begin{lemma}
\label{lem:subgroupcomp}
Let $p_1$ and $p_2$ be primes and let $\cH$ be a subgroup of $\Z_{q}^{*}$, where $q = p_1p_2$ 
such that 
$$\# \{ \cH \bmod{p_\nu}\}\ge  q^{\delta}, \qquad \nu = 1,2$$
for some fixed $\delta > 0$.
Then
$$
\max_{\gcd(a,q)=1}\left|\sum_{h\in \cH}\e_q(ah) \right|\le (\#\cH)^{1-\varrho},
$$
for some $\varrho>0$ which depends only on $\delta > 0$.
\end{lemma}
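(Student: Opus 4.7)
The plan is to derive the bound by adapting the Bourgain--Glibichuk--Konyagin sum-product machinery, extended to the composite modulus $q = p_1 p_2$ as in Bourgain--Chang. By the Chinese Remainder Theorem, identify $\Z_q^* \cong \Z_{p_1}^* \times \Z_{p_2}^*$ and let $\cH_\nu$ denote the image of $\cH$ under reduction modulo $p_\nu$. The hypothesis $\#\cH_\nu \ge q^\delta$ for $\nu=1,2$ is the critical two-sided non-degeneracy: without it the subgroup $\cH$ could sit inside a proper submodule such as $p_1 \Z_q$ and the character sum would have no cancellation.

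Next I would reduce the pointwise exponential sum bound to an additive energy estimate for $\cH$. Writing $E_+(\cH) = \#\{(h_1,h_2,h_3,h_4)\in \cH^4 : h_1+h_2 = h_3+h_4\}$, a standard Parseval plus H\"older computation shows that it suffices to establish
$$
E_+(\cH) \le (\#\cH)^{3-\varrho'}
$$
for some $\varrho' > 0$ depending only on $\delta$; this loses only a harmless factor when passing back to the maximum over $a$ coprime to $q$.

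To establish the energy bound I would invoke the multiplicative structure: since $\cH$ is a group, $\cH \cdot \cH = \cH$, so the multiplicative energy is as large as possible. The sum-product phenomenon in $\Z_{p_1p_2}$ then forces the sum set to grow: under the two-sided non-degeneracy one has $|\cH+\cH| \ge (\#\cH)^{1+\varepsilon}$ for some $\varepsilon = \varepsilon(\delta) > 0$. From this, Pl\"unnecke--Ruzsa or a direct Balog--Szemer\'edi--Gowers argument delivers the additive energy bound above. Finally, one bootstraps through a BGK-style iteration, combining the weak initial bound with repeated applications of the sum-product inequality, to amplify the saving to a genuine power $(\#\cH)^{1-\varrho}$.

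The main obstacle is the sum-product step in the ring $\Z_{p_1p_2}$, which is strictly harder than its prime-modulus counterpart. Because $\Z_q$ contains the non-trivial ideals $p_1 \Z_q$ and $p_2 \Z_q$, any sub-multiplicative set concentrated on such an ideal evades sum-product growth; ruling out this concentration is exactly what the hypothesis $\#\cH_\nu \ge q^\delta$ (for \emph{both} $\nu$) is designed to do, and the proof must track the projections of $\cH + \cH$ and $\cH \cdot \cH$ on each factor simultaneously, typically via a bilinear incidence estimate or a pigeonhole reduction to the prime case on each coordinate.
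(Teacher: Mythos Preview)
The paper does not prove this lemma at all: it is stated as ``a special case of~[BC, Corollary~4.2]'' and used as a black box. What you have written is a high-level outline of the Bourgain--Chang argument itself, so in spirit you are aligned with the cited source rather than offering an alternative route.

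That said, as a sketch your outline has a couple of soft spots worth flagging. First, the claim that an additive energy bound $E_+(\cH) \le (\#\cH)^{3-\varrho'}$ alone \emph{suffices} for the pointwise bound is not correct as stated: an $L^4$ bound on the Fourier transform does not by itself control the $L^\infty$ norm. The multiplicative invariance $\cH\cdot\cH=\cH$ must be fed back in at each step of the amplification, not only at the end; you allude to this in your ``bootstrap'' sentence, but the logical structure is that energy decay and multiplicative invariance are used \emph{together} iteratively, not sequentially. Second, the assertion $|\cH+\cH|\ge(\#\cH)^{1+\varepsilon}$ cannot hold unconditionally, since for $\#\cH$ close to $q$ the sumset is trapped below $q$; the actual statement one proves is a dichotomy (either $\cH$ is already large enough that a trivial or completing-the-sum argument works, or there is genuine growth), and the composite-modulus version in~[BC] tracks this dichotomy on each prime factor. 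These are not fatal to your plan, but a referee would ask you to make both points precise before accepting the argument as a proof.
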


\section{Proof of Theorem~\ref{thm:main1}}


\subsection{Initial tranformations}
Let
$$
\sigma_{p}(a)=\sum_{n\le T}\gamma_n\e_p(a\lambda^{s_n}).
$$
It is also convenient to define $a_p$ as any integer $a\in \{1, \ldots, p-1\}$
with 
\begin{equation}
\label{eq:ap}
\left|\sigma_{p}(a_p)\right| = \max_{\gcd(a,p)=1}\left|\sigma_{p}(a)\right|,
\end{equation}
so that
$$
V_\lambda(\Gamma, \cS;T,X,\Delta)=
\sum_{\substack{p\in \cE_\Delta(X)}} \left|\sigma_{p}(a_p)\right|^{2} .
$$
However, it is more convenient to work with the sums  where each 
term is divided by the divisor function $\tau(p-1)$. We define
$$
W_\lambda(\Gamma, \cS;T,X,\Delta)=
\sum_{\substack{p \in \cE_\Delta(X)}} \frac{1}{\tau(p-1)} \left|\sigma_{p}(a)\right|^{2},
$$
and note the inequality $\tau(n)=n^{o(1)}$ implies that 
$$
V_\lambda(\Gamma, \cS;T,X,\Delta)\le W_\lambda(\Gamma, \cS;T,X,\Delta)X^{o(1)}.
$$
Hence it is enough to prove
\begin{equation}
\begin{split}
\label{eq:thm1bound}
W_\lambda(\Gamma&, \cS;T,X,\Delta)\\
& \le \left(X+\frac{S^{1-1/(3-2\alpha)}T^{1/(3-2\alpha)}}{X^{2\eta/(3-2\alpha)}}+\frac{T}{X^{\delta/(k^2+2)}} \right)TX^{1+o(1)},
\end{split}
\end{equation}
where $\alpha,\beta,\delta,\eta$ satisfy~\eqref{eq:abconditions} and $(\alpha,\beta)$ is an admissible pair.

Fix some $p\le X$ and consider $\sigma_p(a_p)$. We split $s_n$ into arithmetic progressions mod $t_p$.
Using the orthogonality of exponential functions, we obtain 
\begin{align*}
\sigma_p(a_p) &=\sum_{x=1}^{t_p}\sum_{\substack{n\le T \\ s_n \equiv x \bmod t_p}}\gamma_n \e_p(a_p \lambda^{s_n})\\
& = \frac{1}{t_p}\sum_{x=1}^{t_p}\sum_{b=1}^{t_p}\sum_{n\le T}\gamma_n \e_{t_p}(b(s_n-x))\e_p(a_p \lambda^x),
\end{align*}
and hence
\begin{align*}
\sigma_p(a_p)&=\frac{1}{t_p}\sum_{d \mid t_p}\sum_{x=1}^{t_p}
\sum_{\substack{b=1 \\ \gcd(b,t_p)=d}}^{t_p}
\sum_{n\le T}\gamma_n \e_{t_p}(b(s_n-x))\e_p(a_p\lambda^{x}) \\ &=\frac{1}{t_p}\sum_{d \mid t_p}\sum_{x=1}^{t_p}\ \sideset{}{^*} \sum_{b \bmod {(t_p/d)}} \,\sum_{n\le T}\gamma_n \e_{t_p/d}(b(s_n-x))\e_p(a_p\lambda^{x}).
\end{align*}
Let $\xi> 0$ be a  real parameter to be chosen later.
We set 
$$
D_p = \xi t_p,
$$
and
 partition summation over $d$ according to $D_p$. This gives 
\begin{equation}
\label{eq:partition1}
|\sigma_p(a_p)|\le |\sigma_{p,1}(a_p)|+|\sigma_{p,2}(a_p)|,
\end{equation}
where
\begin{equation}
\label{eq:sigma1p}
\sigma_{p,1}(a_p)=\frac{1}{t_p}\sum_{\substack{d \mid t_p \\ d\le D_p}}\sum_{x=1}^{t_p}\ \sideset{}{^*} \sum_{b \bmod {(t_p/d)}} \,\sum_{n\le T}\gamma_n \e_{t_p/d}(b(s_n-x))\e_p(a_p\lambda^{x}),
\end{equation}
and
$$
\sigma_{p,2}(a_p)=\frac{1}{t_p}\sum_{\substack{d \mid t_p \\ d> D_p}}\sum_{x=1}^{t_p}
\ \sideset{}{^*} \sum_{b \bmod {(t_p/d)}} \,\sum_{n\le T}\gamma_n \e_{t_p/d}(b(s_n-x))\e_p(a_p\lambda^{x}).
$$
The equation~\eqref{eq:partition1} implies that 
$$
|\sigma_p(a_p)|^2\ll |\sigma_{p,1}(a_p)|^2+|\sigma_{p,2}(a_p)|^2,
$$
which on averaging over $p\le X$ gives
\begin{equation}
\label{eq:sigma12p}
W_\lambda(\Gamma, \cS;T,X,\Delta)  \ll \Sigma_1+\Sigma_2,
\end{equation}
where 
\begin{equation}
\begin{split}
\label{eq:sigma12}
\Sigma_1 &=\sum_{p\in \cE_\Delta(X)}\frac{1}{\tau(p-1)} |\sigma_{p,1}(a_p)|^2,\\
\Sigma_2  &=\sum_{p\in \cE_\Delta(X)}\frac{1}{\tau(p-1)} |\sigma_{p,2}(a_p)|^2.
\end{split}
\end{equation}

\subsection{The sum  $\Sigma_1$}
To bound $\Sigma_1$ we  use the argument of Garaev~\cite[Theorem~3.1]{Gar}. Fix some $p\le X$ and 
consider $\sigma_{p,1}(a_p)$.  From~\eqref{eq:sigma1p} and the Cauchy-Schwarz inequality 
\begin{align*}
|\sigma_{p,1}(a_p)|^2&=\left|\frac{1}{t_p}\sum_{\substack{d \mid t_p \\ d\le D_p}}\sum_{x=1}^{t_p}
\ \sideset{}{^*} \sum_{b \bmod {(t_p/d)}} \,
\sum_{n\le T}\gamma_n \e_{t_p/d}(b(s_n-x))\e_p(a_p\lambda^{x})\right|^2 \\
&\le \frac{\tau(t_p)}{t_p}\sum_{\substack{d \mid t_p \\ d\le D_p}}\sum_{x=1}^{t_p}
\left|\ \sideset{}{^*} \sum_{b \bmod {(t_p/d)}} \,\sum_{n\le T}\gamma_n \e_{t_p/d}(b(s_n-x)) \right|^2.
\end{align*}
Expanding the square and interchanging summation gives
\begin{align*}
|\sigma_{p,1}(a_p)|^2  &  \le \frac{\tau(t_p)}{t_p}\sum_{\substack{d \mid t_p \\ d\le D_p}}
\ \sideset{}{^*} \sum_{b_1,b_2\bmod {(t_p/d)}} \\
& \qquad \quad \sum_{n_1,n_2\le T}\gamma_{n_1}\overline\gamma_{n_2}\e_{t_p/d}(b_1s_{n_1}-b_2s_{n_2})\sum_{x=1}^{t_p}\e_{t_p/d}(x(b_2-b_1)). 
\end{align*}
By the orthogonality of exponential functions, the inner sum vanishes unless $b_1 = b_2$. Hence 
\begin{align*}
|\sigma_{p,1}(a_p)|^2 & \le \tau(t_p)\sum_{\substack{d \mid t_p \\ d\le D_p}}\ \sideset{}{^*} \sum_{b \bmod {(t_p/d)}} \,\sum_{n_1,n_2\le T}\gamma_{n_1}\overline\gamma_{n_2}\e_{t_p/d}(b(s_{n_1}-s_{n_2})) \\
&\le  \tau(p-1)\sum_{\substack{d \mid t_p \\ d\le D_p}}\ \sideset{}{^*} \sum_{b \bmod {(t_p/d)}} \,\left|\sum_{n\le T}\gamma_n \e_{t_p/d}(bs_n) \right|^2,
\end{align*}
where we have used the inequality $$\tau(t_p)\le \tau(p-1),$$ since $t_p\mid (p-1)$.
Summing over $p\le X$ we see that 
$$
\Sigma_1\le \sum_{p\le X}\sum_{\substack{d \mid t_p \\ d\le D_p}}\ \sideset{}{^*} \sum_{b \bmod {(t_p/d)}} \,\left|\sum_{n\le T}\gamma_n \e_{t_p/d}(bs_n) \right|^2.
$$
We define the sequence of numbers $X_j$ for $1\le j \le J$, 
where 
\begin{equation}
\label{eq:def J}
J = \rf{\frac{\log (X/\Delta)}{\log 2}},
\end{equation}
by 
\begin{equation}
\label{eq:def Xj}
X_1=\Delta, \qquad X_j=\min\{2X_{j-1},X\}, \ 2 \le j \le J,
\end{equation}
and partition the set of primes $p \le X$ into the sets
\begin{equation}
\label{eq:def Rj}
\cR_j=\{p\le X~:~X_j\le t_p<X_{j+1}  \}.
\end{equation}
Writing
$$\Sigma_{1,j}= \sum_{p\in \cR_j}\sum_{\substack{d \mid t_p \\ d\le D_p}}\ \sideset{}{^*}
 \sum_{b \bmod {(t_p/d)}} \,\left|\sum_{n\le T}\gamma_n \e_{t_p/d}(bs_n) \right|^2,$$
we have 
\begin{equation}
\label{eq:Sigma1j}
\Sigma_1 \ll \sum_{j=1}^J\Sigma_{1,j}.
\end{equation}
For each integer $r$, we define the set $\cQ(r)$ by
\begin{equation}
\label{eq:Set Qr}
\cQ(r)=\{ p \le X~:~t_p=r\},
\end{equation}
so that, replacing $t_p$ with $r$ for $p \in \cQ(r)$, we obtain
\begin{align*}
\Sigma_{1,j}&\le \sum_{X_j\le r <2X_j}\sum_{p\in \cQ(r)}\sum_{\substack{d \mid r \\ d\le D_p}}
\ \sideset{}{^*} \sum_{b \bmod {(r/d)}} \,\left|\sum_{n\le T}\gamma_n \e_{r/d}(bs_n) \right|^2 \\
&= \sum_{X_j\le r <2X_j}\#\cQ(r)\sum_{\substack{d \mid r \\ d\le D_p}}
\ \sideset{}{^*} \sum_{b \bmod {(r/d)}} \,\left|\sum_{n\le T}\gamma_n \e_{r/d}(bs_n) \right|^2.
\end{align*} 
For each prime $p\in \cQ(r)$ we have $r\mid (p-1)$ and hence for $X_j\le r <2X_j$ we also have
$$\#\cQ(r)\le \frac{X}{r}\le \frac{X}{X_j} \mand D_p < 2 \xi X_j.$$
This implies that 
\begin{align*}
\Sigma_{1,j}&\le \frac{X}{X_j}\sum_{X_j\le r<2X_j}\sum_{\substack{d \mid r \\ d\le 2 \xi X_j}}
\ \sideset{}{^*} \sum_{b \bmod {(r/d)}} \,
\left|\sum_{n\le T}\gamma_n\e_{r/d}(bs_n)\right|^2 \\
&=\frac{X}{X_j}\sum_{d\le 2 \xi X_j}\sum_{\substack{X_j\le r<2X_j \\ d \mid r}}
\ \sideset{}{^*} \sum_{b \bmod {(r/d)}} \,\left|\sum_{n\le T}\gamma_n\e_{r/d}(bs_n)\right|^2,
\end{align*}
and hence
\begin{equation}
\label{eq:Fj}
\Sigma_{1,j}\le \frac{X}{X_j}\sum_{d \le 2 \xi X_j}F_j(d),
\end{equation}
where $F_j(d)$ is given by 
$$
F_j(d)=\sum_{\substack{X_j/d\le m<2X_j/d}} 
\ \sideset{}{^*} \sum_{b \bmod m} \,
\left|\sum_{n\le T}\gamma_n\e_{m}(bs_n)\right|^2.
$$
An application of  Lemma~\ref{lem:largesieve} gives
$$
F_j(d)\ll \left(\frac{X_j^2}{d^2}+S\right)T,
$$ 
which combined with~\eqref{eq:Fj} implies that 

$$
\Sigma_{1,j}\le \frac{X}{X_j}\sum_{d\le 2 \xi X_j}\left(\frac{X^2_j}{d^2}+S \right)T\ll \frac{X}{X_j}\left(X_j^2+2 \xi X_j S\right)T,
$$
and hence by~\eqref{eq:Sigma1j}
\begin{equation}
\label{eq:sigma1bound}
\Sigma_{1}\ll \sum_{j =1}^J  \frac{X}{X_j}\left(X_j^2+ \xi X_j S\right)T \ll X(X+ \xi S\log{X} )T.
\end{equation}

\subsection{The sum  $\Sigma_2$}
 Fix some $p\le X$ and consider $\sigma_{p,2}(a_p)$. For each value of $d$ in  the outermost summation we split summation over $x$ into arithmetic progressions mod $t_p/d$. Recalling that $\sigma_{p,2}(a_p)$ is given by
$$
\sigma_{p,2}(a_p)=\frac{1}{t_p}\sum_{\substack{d \mid t_p \\ d> D_p}}\sum_{x=1}^{t_p}
\ \sideset{}{^*} \sum_{b \bmod {(t_p/d)}} \,\sum_{n\le T}\gamma_n \e_{t_p/d}(b(s_n-x))
\e_p\(a_p\lambda^{x}\),
$$
 we see that
\begin{align*}
\sigma_{p,2}(a_p)=\frac{1}{t_p}\sum_{\substack{d \mid t_p \\ d>D_p}}&\sum_{y=1}^{t_p/d}\ \sideset{}{^*} \sum_{b \bmod {(t_p/d)}} \\
&\quad \sum_{n\le T}\gamma_n\e_{t_p/d}(b(s_n-y))\sum_{z=1}^{d}
\e_p\(a_p\lambda^{y}\lambda^{zt_p/d}\),
\end{align*}
and hence

\begin{align*}
|\sigma_{p,2}(a_p)|&\le \frac{1}{t_p}\sum_{\substack{d \mid t_p \\ d>D_p}}\sum_{y=1}^{t_p/d}\left|\ \sideset{}{^*} \sum_{b \bmod {(t_p/d)}} \,\sum_{n\le T}\gamma_n \e_{t_p/d}(b(s_n-y)) \right|\\
& \qquad \qquad \qquad \qquad \qquad \qquad \qquad \qquad \times \left|\sum_{z=1}^{d}
\e_p\(a_p\lambda^{y}\lambda^{zt_p/d}\) \right| \\
&\le \sum_{\substack{d \mid t_p \\ d>D_p}}\frac{1}{t_p}\sum_{y=1}^{t_p/d}\left|\ \sideset{}{^*} \sum_{b \bmod {(t_p/d)}} \,\sum_{n\le T}\gamma_n \e_{t_p/d}(b(s_n-y)) \right|\\
& \qquad \qquad \qquad \qquad \qquad \qquad \qquad \qquad \times 
 \left|\sum_{z=1}^{d}\e_p\(f_{d,p}\lambda^{zt_p/d}\) \right|,
\end{align*}
where $f_{d,p}$ is chosen to satisfy
$$
 \left|\sum_{z=1}^{d}\e_p\(f_{d,p}\lambda^{zt_p/d}\) \right|
=  \max_{\gcd(a,p)=1} \left|\sum_{z=1}^{d}\e_p\(a \lambda^{zt_p/d}\) \right|.
$$
Let 
$$
U(p,d)=\frac{1}{t_p}\sum_{y=1}^{t_p/d}\left|\sum_{\substack{b=1 \\ \gcd(b,t_p/d)=1}}^{t_p/d}\sum_{n\le T}\gamma_n \e_{t_p/d}(b(s_n-y)) \right|,
$$
so that 
\begin{equation}
\label{eq:Sigma and U}
|\sigma_{p,2}(a_p)|\le\sum_{\substack{d \mid t_p \\ d>D_p}}U(p,d)\left|\sum_{z=1}^{d}\e_p(f_{d,p}\lambda^{zt_p/d}) \right|.
\end{equation}
We consider bounding the terms $U(p,d)$. By the Cauchy-Schwarz inequality
\begin{align*}
U(p,d)^2&\le \frac{1}{dt_p}\sum_{y=1}^{t_p/d}\left|\sum_{\substack{b=1 \\ \gcd(b,t_p/d)=1}}^{t_p/d}\sum_{n\le T}\gamma_n \e_{t_p/d}(b(s_n-y)) \right|^2 \\
&=\frac{1}{dt_p}\sum_{1\le n_1,n_2 \le T}\sum_{\substack{b_1,b_2 = 1\\\gcd(b_1b_2,t_p/d)=1}}^{t_p/d}\gamma_{n_1}\overline\gamma_{n_2}\e_{t_p/d}(b_1s_{n_1}-b_2s_{n_2})\\
& \qquad \qquad \qquad \qquad \qquad \qquad  \qquad \qquad
\times \sum_{y=1}^{t_p/d}\e_{t_p/d}(y(b_1-b_2)). 
\end{align*}
Using the orthogonality of exponential functions again,  we see that the last sums vanishes 
unless $b_1 = b_2$. This gives
$$
U(p,d)^2 \le  \frac{1}{d^2}\sum_{1\le n_1,n_2 \le T}\sum_{\substack{b=1 \\ \gcd(b,t_p/d)=1}}^{t_p/d}\gamma_{n_1}\overline\gamma_{n_2}\e_{t_p/d}(b(s_{n_1}-s_{n_2})).
$$
After rearranging and extending the summation over $b$ to the complete residue system 
modulo $t_p/d$, we derive
\begin{align*}
U(p,d)^2 &\le  \frac{1}{d^2}\sum_{\substack{b=1 \\ \gcd(b,t_p/d)=1}}^{t_p/d}  \sum_{1\le n_1,n_2 \le T}\gamma_{n_1}\overline\gamma_{n_2}\e_{t_p/d}(b(s_{n_1}-s_{n_2}))\\
& = \frac{1}{d^2}\sum_{\substack{b=1 \\ \gcd(b,t_p/d)=1}}^{t_p/d}  \left| \sum_{1\le n \le T}\gamma_{n}\e_{t_p/d}(b s_{n})\right|^2\\
& \le \frac{1}{d^2} \sum_{b=1}^{t_p/d}  \left| \sum_{1\le n \le T}\gamma_{n}\e_{t_p/d}(b s_{n})\right|^2  =\frac{ t_p}{ d^3} V\left(t_p/d\right),
\end{align*}
where for an integer $r\ge 1$ we define
\begin{equation}
\label{eq:def Vt}
V(r)=\#\{(n_1,n_2)\in [1,T]^2~:~ s_{n_1}\equiv s_{n_2} \bmod{r}\}.
\end{equation}
Substituting this in~\eqref{eq:Sigma and U} gives
\begin{equation*}
|\sigma_{p,2}(a_p)|\le t_p^{1/2}\sum_{\substack{d \mid t_p \\ d>D_p}}\frac{1}{d^{3/2}}
V\left(t_p/d\right)^{1/2}\left|\sum_{z=1}^{d}\e_p\(f_{d,p}\lambda^{zt_p/d}\) \right|.
\end{equation*}
Summing over $p\le X$ gives
$$
\Sigma_2\le \sum_{p \in \cE_\Delta(X)}\frac{t_p}{\tau(p-1)} \left(\sum_{\substack{d \mid t_p \\ d>D_p}}\frac{1}{d^{3/2}}
V\left(t_p/d\right)^{1/2} \left|\sum_{z=1}^{d}\e_p\(f_{d,p}\lambda^{zt_p/d}\) \right| \right)^2,
$$
which by the Cauchy-Schwarz inequality implies that  
\begin{align*}
\Sigma_2&\le  \sum_{p \in \cE_\Delta(X)}\frac{t_p\tau(t_p)}{\tau(p-1)} \sum_{\substack{d \mid t_p \\ d>D_p}}\frac{1}{d^{3}}V\left(t_p/d\right) \left|\sum_{z=1}^{d}\e_p\(f_{d,p}\lambda^{zt_p/d}\) \right|^2 \\ 
&\le  \sum_{p \in \cE_\Delta(X)}t_p \sum_{\substack{d \mid t_p \\ d>D_p}}\frac{1}{d^{3}}V\left(t_p/d\right) \left|\sum_{z=1}^{d}\e_p\(f_{d,p}\lambda^{zt_p/d}\) \right|^2.
\end{align*}
At this point our strategy is to rearrange summation so we may apply Lemma~\ref{lem:aa}.  We define the sequence  $X_j$  as in~\eqref{eq:def Xj}, we let $\cQ(r)$ be given by~\eqref{eq:Set Qr} and for each integer $r$ we define the following subsets $S_{i}(r)$ of  $\cQ(r)$
$$
\cS_{i}(r)=\{p~:~2^{i}\le p \le 2^{i+1}  \text{ and } t_p=r \}. 
$$

Writing
$$
\Sigma_{2,i,j} =
\sum_{X_j\le r \le X_{j+1}} r \sum_{p \in \cS_{i}(r)} \sum_{\substack{d \mid r \\ d>D_p}}\frac{1}{d^{3}}V\left(r/d\right) \left|\sum_{z=1}^{d}\e_p\(f_{d,p}\lambda^{zr/d}\) \right|^2, 
$$
the above implies that
$$
\Sigma_2 \le  \sum_{i =1}^J\sum_{j : X_j \ll 2^{i}}\Sigma_{2,i,j}.
$$

To further transform the sums $\Sigma_{2,i,j},$ define the numbers $Z_{j}$ by 
\begin{equation}
\label{eq:Zij}
Z_{j}=\xi X_j, \qquad j =1, \ldots, J,
\end{equation}
so that
$$
\Sigma_{2,i,j} \ll X_{j}\sum_{X_j\le r \le X_{j+1}}\sum_{p \in \cS_{i}(r)}\sum_{\substack{d \mid r \\ d>Z_{j}}}\frac{1}{d^{3}}V\left(r/d\right) \left|\sum_{z=1}^{d}\e_p\(f_{d,p}\lambda^{zr/d}\) \right|^2.
$$
After interchanging summation, we arrive at 
\begin{equation}
\begin{split}
\label{eq:Sigma2ij}
\Sigma_{2,i,j} \ll X_{j}\sum_{Z_{j}<d \le X_{j+1}}\frac{1}{d^3}&
\sum_{\substack{X_j\le r \le X_{j+1} \\ d \mid r}}V\left(r/d\right)\\
&\qquad \sum_{p \in \cS_{i}(r)}
  \left|\sum_{z=1}^{d}\e_p\(f_{d,p}\lambda^{zr/d}\) \right|^2.
\end{split}
\end{equation}

Let $\rho$ be a parameter to be chosen later. 
We now  partition summation over $i$ and $j$ in $\Sigma_2$ as follows 
\begin{equation}
\label{eq:Sigma2ij123}
\Sigma_2 \le\Sigma_2^{\le}+\Sigma_2^{\ge},
\end{equation}
where 
$$
\Sigma_2^{\le}= \sum_{i =1}^J\sum_{j : X_j \le 2^{i \rho}}\Sigma_{2,i,j}
\mand 
\Sigma_2^{\ge}= \sum_{i =1}^J\sum_{j : 2^{i \rho}\le X_j \ll 2^{i}}\Sigma_{2,i,j}.
$$

To estimate $\Sigma_2^{\le}$, we first fix some $j$ with $X_j \le 2^{i\rho}$.
Considering the inner summation over $p$, we partition $\cS_{i}(r)$ according to Lemma~\ref{lem:aa}.
Let 
$$U_{i}(r)=\frac{2^{i(1-1/(k^2+2))}}{r^{1-2/(k^2+2)}},$$
and for integer $k$ we define the sets $S^{(1)}_{i}(r)$ and $S^{(2)}_{i}(r)$ by
\begin{align*}
\cS^{(1)}_{i}(r)&=\biggl\{ p\in \cS_{i}(r)~:\\
& \qquad \qquad \quad ~ \left|\sum_{z=1}^{d}\e_p\(f_{d,p}\lambda^{zr/d}\) \right|\le d 2^{i/2k^2}\(d^{-1/k}+U_{i}(r)^{-1/k^2}\)\biggr\},\\
\cS^{(2)}_{i}(r)&= \cS_{i}(r) \setminus \cS^{(1)}_{i}(r).
\end{align*}
Lemma~\ref{lem:aa} implies that 
$$\# \cS^{(2)}_{i}(r)\ll \frac{U_{i}(r)}{\log U_{i}(r)}.$$
Considering $\cS^{(1)}_{i}(r)$ and using the fact that $r \mid p-1$ for $p\in  \cS_{i}(r)$ gives
\begin{equation}
\label{eq:bound Sir}
\# \cS^{(1)}_{i}(r)\le \# \cS_{i}(r) \ll \frac{2^{i}}{r},
\end{equation}
which implies that 
\begin{align*}
\sum_{p \in \cS_{i}(r)}& \left|\sum_{z=1}^{d}\e_p\(f_{d,p}\lambda^{zr/d}\) \right|^2\\
& \qquad  \ll 
d^2\left(\frac{  2^{i(1+1/k^2)}}{r}(d^{-2/k}+U_{i}(r)^{-2/k^2})+\frac{U_{i}(r)}{\log U_{i}(r)}\right).
\end{align*}
Recalling the choice of $U_{i}(r)$ we see that
\begin{equation*}
\sum_{p \in \cS_{i}(r)}\left|\sum_{z=1}^{d}\e_p\(f_{d,p}\lambda^{zr/d}\) \right|^2\ll \frac{d^22^{i(1-1/(k^2+2))}}{r^{1-2/(k^2+2)}}+\frac{d^{2-2/k}2^{i(1+1/k^2)}}{r},
\end{equation*}
which on assuming that 
\begin{equation}
\label{eq:DeltaXassumption}
X\le \left(\xi \Delta \right)^{k},
\end{equation}
simplifies to
\begin{equation}
\label{eq:Snontrivial}
\sum_{p \in \cS_{i}(r)}\left|\sum_{z=1}^{d}\e_p\(f_{d,p}\lambda^{zr/d}\) \right|^2\ll \frac{d^22^{i(1-1/(k^2+2))}}{r^{1-2/(k^2+2)}}.
\end{equation}
Hence considering  $\Sigma_{2,i,j}$, we have 
\begin{align*}
\Sigma_{2,i,j}&\ll X_{j}2^{i(1-1/(k^2+2))}\sum_{Z_{j}<d \le X_{j+1}}\frac{1}{d}\sum_{\substack{X_j\le r \le X_{j+1} \\ d \mid r}}\frac{V\left(r/d\right)}{r^{1-2/(k^2+2)}} \\
&\ll X_{j}2^{i(1-1/(k^2+2))}\sum_{Z_{j}<d \le X_{j+1}}\frac{1}{d^{2-2/(k^2+2)}}\sum_{\substack{X_j/d\le  r \le X_{j+1}/d }}\frac{V(r)}{r^{1-2/(k^2+2)}},
\end{align*}
after the change of variable $r \to dr$. Writing
$$W_j(d)= \sum_{\substack{X_j/d\le  r \le X_{j+1}/d }}\frac{V(r)}{r^{1-2/(k^2+2)}},$$
the above implies
\begin{equation}
\label{eq:Sigma2W}
\Sigma_{2,i,j}\ll  X_{j}2^{i(1-1/(k^2+2))}\sum_{Z_{j}<d \le X_{j+1}}\frac{W_j(d)}{d^{2-2/(k^2+2)}}.
\end{equation}

Considering the sum $W_j(d)$ and recalling the definition of $V(r)$ given by~\eqref{eq:def Vt}, 
we have
\begin{align*}
W_j(d) & =\sum_{\substack{X_j/d\le  r \le X_{j+1}/d }}\sum_{\substack{1\le n_1,n_2 \le T \\ s_{n_1} \equiv s_{n_2} \bmod {r} }}\frac{1}{r^{1-2/(k^2+2)}} \\ &
\ll \left(\frac{d}{X_j}\right)^{1-2/(k^2+2)}\sum_{1\le n_1,n_2 \le T}\sum_{\substack{X_j/d\le  r \le X_{j+1}/d \\ s_{n_1} \equiv s_{n_2} \bmod {r}}} 1.
\end{align*}
Considering the last sum on the right, we have
\begin{align*}
\sum_{1\le n_1,n_2 \le T}\sum_{\substack{X_j/d\le  r \le X_{j+1}/d \\ s_{n_1} \equiv s_{n_2} \bmod {r}}} 1\ll  \frac{TX^{j}}{d}+\sum_{\substack{1\le n_1<n_2\le T}}\sum_{\substack{X_j/d\le  r \le X_{j+1}/d \\ s_{n_1} \equiv s_{n_2} \bmod {r}}} 1.
\end{align*}
Since the term
$$\sum_{\substack{X_j/d\le  r \le X_{j+1}/d \\ s_{n_1} \equiv s_{n_2} \bmod {r}}} 1,$$
is bounded by the number of divisors of $s_{n_2}-s_{n_1}$, we see that 
$$
\sum_{\substack{X_j/d\le  r \le X_{j+1}/d \\ s_{n_1} \equiv s_{n_2} \bmod {r}}} 1= S^{o(1)},
$$
and hence 
\begin{equation}
\label{eq:sum div}
\sum_{1\le n_1,n_2 \le T}\sum_{\substack{X_j/d\le  r \le X_{j+1}/d \\ s_{n_1} \equiv s_{n_2} \bmod {r}}} 1\ll  \left(\frac{X_j}{d}+TS^{o(1)}\right)T,
\end{equation}
which gives
$$
W_j(d)\le \left(\frac{d}{X_j}\right)^{1-2/(k^2+2)}\left(\frac{X_j}{d}+TS^{o(1)}\right)T.
$$
Substituting the above into~\eqref{eq:Sigma2W} we get 
\begin{align*}
\Sigma_{2,i,j}&\ll  X_{j}^{1+2/(k^2+2)}2^{i(1-1/(k^2+2))}T\sum_{Z_{j}<d \le X_{j+1}}\frac{1}{d^{2}} \\
& + X_{j}^{2/(k^2+2)}2^{i(1-1/(k^2+2))}T^2S^{o(1)}\sum_{Z_{j}<d \le X_{j+1}}\frac{1}{d},
\end{align*}
which simplifies to
\begin{align*}
\Sigma_{2,i,j} & \le \frac{X_{j}^{1+2/(k^2+2)}2^{i(1-1/(k^2+2))}T}{Z_{j}}+X_{j}^{2/(k^2+2)}2^{i(1-1/(k^2+2))}T^2(SX)^{o(1)} \\ 
&\le X_{j}^{2/(k^2+2)}2^{i(1-1/(k^2+2))}\left(\frac{1}{\xi}+T\right)T(SX)^{o(1)},
\end{align*}
on recalling the choice of $Z_{j}$ given by~\eqref{eq:Zij}. 

We now  assume that 
\begin{equation}
\label{eq:etaassumption}
\xi \ge \frac{1}{T}.
\end{equation}
Without loss of generality, we can also assume that $S= X^{O(1)}$ and thus 
$(SX)^{o(1)} = X^{o(1)}$.
Hence, the above bounds further simplify to
$$
\Sigma_{2,i,j}  \le T^2 X_{j}^{2/(k^2+2)}2^{i(1-1/(k^2+2))}X^{o(1)}.
$$
 Summing over $i$ and $j$ with $X_j \le 2^{i \rho}$ we arrive at
$$
\Sigma_2^{\le} \le T^2X^{o(1)}\sum_{i=1}^{J}\sum_{j: X_j \le 2^{i \rho }}X_{j}^{2/(k^2+2)}2^{i(1-1/(k^2+2))},
$$
and hence 
\begin{equation}
\label{eq:sigma<bound}
\Sigma_2^{\le}\le  T^2 X^{1-(1-2\rho)/(k^2+2)}X^{o(1)}.
\end{equation}

We next consider $\Sigma_2^{\ge}$. 
We begin our treatment of $\Sigma_2^{\ge}$ in a similar fashion to $\Sigma_2^{\le}$. 
In particular, we use~\eqref{eq:Sigma2ij} and the assumption that $(\alpha, \beta)$ is admissible to 
obtain
\begin{equation}
\label{eq:Sigma2ij-ab}
\Sigma_{2,i,j}  \le 2^{i(2\beta +o(1))}X_{j}\sum_{X_j\le r \le X_{j+1}}\# \cS_{i,j}(r)\sum_{\substack{d \mid r \\ d>Z_{j}}}\frac{1}{d^{3-2\alpha}}V\left(r/d\right),
\end{equation}
as $i \to \infty$. 

Using~\eqref{eq:bound Sir} and then  rearranging the order of summation,  the above reduces to
\begin{align*}
\Sigma_{2,i,j}  
& \le 2^{i(1+2\beta +o(1))}\sum_{X_j\le r \le X_{j+1}}\sum_{\substack{d \mid r \\ d>Z_{j}}}\frac{1}{d^{3-2\alpha}}V\left(r/d\right)\\
& \le 2^{i(1+2\beta +o(1))}\sum_{Z_j<d\le X_{j+1}}\frac{1}{d^{3-2\alpha}}W_j(d),
\end{align*}
where 
$$
W_j(d)=\sum_{X_j/d \le r \le X_{j+1}/d}V(r).
$$
We see from the definition~\eqref{eq:def Vt} that
$$
W_j(d)=\sum_{1\le n_1,n_2 \le T}\sum_{\substack{X_j/d\le r \le X_{j+1}/d \\ s_{n_1}\equiv s_{n_2} \bmod {r}}}1 \le \left(\frac{X_j}{d}+TS^{o(1)} \right)T,
$$
and hence
\begin{align*}
\Sigma_{2,i,j}& \le 2^{i(1+2\beta+o(1))}T\left(X_j\sum_{Z_j<d \le X_{j+1}}\frac{1}{d^{4-2a}}+TS^{o(1)}\sum_{Z_j \le d \le X_{j+1}}\frac{1}{d^{3-2\alpha}}\right) \\
&\le 2^{i(1+2\beta+o(1))}T\left(\frac{X_j}{Z_j^{3-2\alpha}}+\frac{TS^{o(1)}}{Z_j^{2-2a}}\right).
\end{align*}
Since obviously $S \le X^{O(1)}$, we can replace both $2^{o(i)}$ and $S^{o(1)}$ 
with $X^{o(1)}$. 
Recalling the choice of $Z_j$  and  the assumption~\eqref{eq:etaassumption},  we get 
$$
\Sigma_{2,i,j}\ll \frac{2^{i(1+2\beta)}T}{\xi^{2(1-\alpha)}X_j^{2(1-\alpha)}}
\(\xi^{-1}+T  \) X^{o(1)} \le
 \frac{2^{i(1+2\beta)}T^2}{\xi^{2(1-\alpha)}X_j^{2(1-\alpha)}} X^{o(1)}  .
$$
This implies that 
\begin{equation}
\begin{split}
\label{eq:sigma>bound}
\Sigma_2^{\ge} & \le  \frac{1}{\xi^{2(1-\alpha)}}T^2X^{o(1)}   \sum_{i=1}^{J}\sum_{ j: 2^{i \rho}\le X_j \le 2^{i}}\frac{2^{i(1+2\beta)}}{X_j^{2(1-\alpha)}}\\
&\le  T^2 \frac{X^{1+2(\beta+\eta-\rho (1-\alpha))}}{\xi^{2(1-\alpha)}} X^{o(1)}.
\end{split}
\end{equation}
Substituting the bounds~\eqref{eq:sigma<bound} and~\eqref{eq:sigma>bound} 
 in~\eqref{eq:Sigma2ij123}, we see that 
\begin{equation}
\label{eq:sigma2bound}
\Sigma_2\le \left(\frac{1}{X^{(1-2\rho)/(k^2+2)}}+\frac{X^{2(\beta-\rho (1-\alpha))}}{\xi^{2(1-\alpha)}} \right)T^2X^{1+o(1)}.
\end{equation}

\subsection{Concluding the proof}
Substituting~\eqref{eq:sigma1bound} and~\eqref{eq:sigma2bound} in~\eqref{eq:sigma12p}, gives
\begin{align*}
 W_\lambda(\Gamma&, \cS;T,X,\Delta)\\
& \le \left(X+\xi S+\frac{T}{X^{(1-2\rho)/(k^2+2)}}+\frac{TX^{2(\beta-\rho (1-\alpha))}}{\xi^{2(1-\alpha)}} \right)TX^{1+o(1)}.
\end{align*}
Let $\eta>0$ be a parameter and make the substitution
$$
\rho=\frac{\beta+\eta}{1-\alpha}.
$$
The above transforms into
\begin{align*}
W_\lambda&(\Gamma, \cS;T,X,\Delta) \\
&\le \left(X+\xi S+\frac{T}{X^{(1-2(\beta+\eta)/(1-\alpha))/(k^2+2)}}+\frac{T}{\xi^{2(1-\alpha)}X^{2\eta}} \right)TX^{1+o(1)}.
\end{align*}
Next we chooise 
$$\xi=\left(\frac{T}{SX^{2\eta}} \right)^{1/(3-2\alpha)},$$
to balance the second and fourth terms.
 This gives
\begin{align*}
W_\lambda&(\Gamma, \cS;T,X,\Delta)\\
& \le \left(X+\frac{S^{1-1/(3-2\alpha)}T^{1/(3-2\alpha)}}{X^{2\eta/(3-2\alpha)}}+\frac{T}{X^{(1-2(\beta+\eta)/(1-\alpha))/(k^2+2)}} \right)TX^{1+o(1)}.
\end{align*}
We now note that the assumption~\eqref{eq:abconditions}
implies that 
\begin{align*}
W_\lambda(\Gamma, \cS&;T,X,\Delta)  \\ 
& \le \left(X+\(S^{2-2\alpha}TX^{-2\eta}\)^{1/(3-2\alpha)}+\frac{T}{X^{\delta/(k^2+2)}} \right)TX^{1+o(1)}.
\end{align*}
which is the desired bound.

Finally,  to complete the proof,  it remains to note that~\eqref{eq:DeltaXassumption} is satisfied by the assumption~\eqref{eq:kassumption} 
and~\eqref{eq:etaassumption} is satisfied by~\eqref{eq:TSX}.

\section{Proof of Theorem~\ref{thm:main2}}

\subsection{Initial tranformations}
As before, for each prime $p$ we define the number $a_p$ by~\eqref{eq:ap}.
Taking $Z=X^{1/4}$ in Lemma~\ref{lem:ordp} and recalling that $t_p$ denotes the order of $\lambda$ mod $p$, we have
\begin{equation}
\begin{split}
\label{eq:121}
V_\lambda\(\Gamma, \cS;T,X\) & \le  X^{1/2}T^2 + V_\lambda(\Gamma, \cS;T,X,X^{1/4}) \\
& = X^{1/2}T^2 +\sum_{p \in \cE_{X^{1/4}}(X)}\left|\sigma_p(a_p)\right|^2.
\end{split}
\end{equation}

We define the sequence of numbers $X_j$,  as in~\eqref{eq:def Xj}
with $\Delta =X^{1/4}$. We also define the sets $\cR_j$ as in~\eqref{eq:def Rj}
for $j =1, \ldots, J$ with $J$ given by~\eqref{eq:def J}. 

Hence, partitioning summation over $p$ in~\eqref{eq:121} according to $\cR_j$ gives,
$$
V_\lambda\(\Gamma, \cS;T,X\) \ll X^{1/2}T^2+\sum_{j=1}^J W_j, 
$$
where
\begin{align*}
W_j=\sum_{p\in \cR_j}\left|\sigma_p(a_p)\right|^2.
\end{align*}

We define the number $Y$ by
\begin{equation}
\label{eq:Ydef}
Y=\frac{X^{3/4}S^{1/4}}{T^{1/4}},
\end{equation}
and let $I$ be the largest integer $j$ with $X_j \le Y$ (since $S\ge T$ we obviously have 
$Y \ge X^{3/4}> X^{1/4}$ so $I$ is correctly defined). 

We  now further partition  the summation over $j$ and re-write~\eqref{eq:121} as
\begin{equation}
\label{eq:Wj12}
V_\lambda\(\Gamma, \cS;T,X\)\le X^{1/2}T^2+W^{\le}+W^{\ge},
\end{equation}
where
\begin{equation}
\label{eq:W12def}
W^{\le}=\sum_{j =1}^IW_j  \mand W^{\ge}=\sum_{j =I+1}^JW_j.
\end{equation}

\subsection{The sum $W^{\le}$}  
We fix some $j$ with $X^{1/4}\le X_j< Y$. Considering $W_j$, we define the sets
\begin{equation}
\label{eq:Vrdef}
\cV_j(r)=\{p\in \cR_j ~:~t_p=r\},
\end{equation}
so that 
\begin{equation}
\label{eq:WjUr}
W_j=\sum_{X_j<r\le 2X_j}U_{j,r},
\end{equation}
where $U_{j,r}$ is given by 
\begin{align*}
U_{j,r}=\sum_{p\in \cV_j(r)}\left|\sigma_p(a_p)\right|^2.
\end{align*}
For each $p\in \cV_j(r)$ we   define the complex number $c_{j,r,p}$ by
\begin{align*}
c_{j,r,p}=\frac{\overline \sigma_p(a_p)}{\left(\sum_{p\in \cV_j(r)}|\sigma_p(a_p)|^2\right)^{1/2}},
\end{align*}
so that
\begin{equation}
\label{eq:cpnorm}
\sum_{p\in \cV_j(r)}\left|c_{j,r,p} \right|^2=1,
\end{equation}
and writing
\begin{align*}
U_{j,r}^{*}=\sum_{p\in \cV_j(r)}\sum_{1\le n \le T}c_{j,r,p}\gamma_n \e_p(a_p\lambda^{s_n}),
\end{align*}
we see that 
\begin{equation}
\label{eq:Udual}
|U_{j,r}^{*}|=U_{j,r}^{1/2}.
\end{equation}
 We have
\begin{align*}
U_{j,r}^{*}=\sum_{0\le x<r}\sum_{p\in  \cV_j(r)}b_r(x)c_{j,r,p}\e_p(a_p\lambda^{x}),
\end{align*}
where 
\begin{equation}
\label{eq:brdef}
b_r(x)=\sum_{\substack{1\le n \le T \\ s_n \equiv x \bmod r}}\gamma_n,
\end{equation}
and hence by the Cauchy-Schwarz inequality
\begin{align*}
|U_{j,r}^{*}|^{2}\le\sum_{0\le x <r}|b_{r}(x)|^2\sum_{0\le x <r}\left|\sum_{p\in \cV_j(r)}c_{j,r,p} \e_{p}(a_p \lambda^{x}) \right|^2.
\end{align*}
Expanding the square and interchanging summation gives
\begin{align*}
|U_{j,r}^{*}|^{2}&\le\sum_{0\le x <r}|b_{r}(x)|^2\sum_{p_1,p_2\in \cV_j(r)}|c_{p_1}||c_{p_2}|\left|\sum_{0\le x <r}\e_{p_1p_2}((a_{p_1}p_2-a_{p_2}p_1)\lambda^{x}) \right|, 
\end{align*}
which implies that
\begin{align*}
|U_{j,r}^{*}|^2 &\le\left(\sum_{0\le x <r}|b_{r}(x)|^2\sum_{p\in \cV_j(r)}|c_{j,r,p}|^2\right)r \\ & \quad \quad +\sum_{0\le x <r}|b_{r}(x)|^2\sum_{\substack{p_1,p_2\in \cV_j(r) \\ p_1 \neq p_2}}|c_{p_1}||c_{p_2}|\max_{(a,p_1p_2)=1}\left|\sum_{0\le x <r}\e_{p_1p_2}(a\lambda^{x}) \right|.
\end{align*}
Since 
$$t_{p_1}=t_{p_2}=r,$$
the set 
$$H=\{ \ \lambda^{x} \bmod{p_1p_2}~:~0\le x<r \ \},$$
is a subgroup of $\Z^{*}_{p_1p_2}$ and from the inequalities 
$$r\ge X^{1/4}>(p_1p_2)^{1/8},$$
we see that the conditions of Lemma~\ref{lem:subgroupcomp} are satisfied. An application of Lemma~\ref{lem:subgroupcomp} gives
$$
|U_{j,r}^{*}|^2 \le\left(\sum_{0\le x <r}|b_{r}(x)|^2\sum_{p\in \cV_j(r)}|c_{j,r,p}|^2\right)r+\sum_{0\le x <r}|b_{r}(x)|^2\left(\sum_{\substack{p\in \cV_j(r)}}|c_{j,r,p}|\right)^2r^{1-\varrho},
$$
which by the Cauchy-Schwarz inequality implies that
$$
 |U_{j,r}^{*}|^2\le\sum_{0\le x <r}|b_{r}(x)|^2 \sum_{p\in \cV_j(r)}|c_{j,r,p}|^2\left(r+|\cV_j(r)|r^{1-\varrho}\right),
$$
and hence by~\eqref{eq:cpnorm}
$$
 |U_{j,r}^{*}|^2\le\sum_{0\le x <r}|b_{r}(x)|^2\left(r+|\cV_j(r)|r^{1-\varrho}\right).
$$
Since
\begin{equation}
\label{eq:Vrin}
|\cV_j(r)|\le \frac{X}{r},
\end{equation}
we get 
\begin{equation}
\label{eq:Ur*}
 |U_{j,r}^{*}|^2\le\left(r+\frac{X}{r^{\varrho}}\right) \sum_{0\le x <r}|b_{r}(x)|^2.
\end{equation}
Recalling~\eqref{eq:brdef} and the assumption each $|\gamma_n|\le 1$, we see that
$$
\sum_{0\le x <r}|b_{r}(x)|^2
=\sum_{1\le n_1,n_2 \le T}\gamma_{n_1}\overline \gamma_{n_2}\sum_{\substack{0\le x <r \\ s_{n_1}\equiv x \bmod{r} \\ s_{n_2}\equiv x \bmod{r}}}1 = V(r),
$$
where $V(r)$ is defined by~\eqref{eq:def Vt}. 
By~\eqref{eq:Ur*} we have
\begin{align*}
 |U_{j,r}^{*}|^2\le V(r)\left(r+\frac{X}{r^{\varrho}}\right),
\end{align*}
and hence by~\eqref{eq:Udual}
\begin{align*}
|U_{j,r}|\le V(r)\left(r+\frac{X}{r^{\varrho}}\right).
\end{align*}
Combining the above with~\eqref{eq:WjUr} gives
\begin{equation}
\label{eq:Wj123}
W_j \le \sum_{X_j<r \le 2X_{j}}V(r) \left(X_j+\frac{X}{X_j^{\varrho}}\right).
\end{equation}

As in the proof of Theorem~\ref{thm:main1}, see~\eqref{eq:sum div}, we have 
\begin{align*}
\sum_{X_j<r \le 2X_{j}}V(r) &\ll X_{j}T+\sum_{\substack{1\le n_1,n_2 \le T \\ n_1 \neq n_2}}\sum_{\substack{X_j<r\le 2X_{j} \\ s_{n_1}\equiv s_{n_2} \bmod{r}}}1 \\
& \le (X_j+TS^{o(1)})T\le T^{2+o(1)},
\end{align*}
where we have used the assumption $S \le T^2$ and $T>X$ as otherwise Theorem~\ref{thm:main2} is trivial. Substituting the above into~\eqref{eq:Wj123} gives
\begin{align*}
W_j\le \left(X_j+\frac{X}{X_j^{\varrho}}\right)T^{2+o(1)},
\end{align*}
and hence by~\eqref{eq:W12def}
\begin{equation}
\label{eq:W1bound}
W^{\le}\le \left(Y+XX_1^{-\varrho}\right)T^{2+o(1)} \le \left(Y+X^{1-\varrho/4}\right)T^{2+o(1)}.
\end{equation}

\subsection{The sum $W^{\ge}$}

  We fix some $j$ with $Y\le X_j\le X$ and arrange $W_j$ as follows
\begin{align*}
W_j&=\sum_{p\in \cR_j}\left|\sigma_p(a_p)\right|^2 \le T\sum_{p\in \cR_j}\left|\sigma_p(a_p)\right|,
\end{align*}
and hence there exists some sequence of complex numbers $c_{j,p}$ with $|c_{j,p}|=1$ such that 
\begin{align*}
W_j\le T\sum_{p\in \cR_j}\sum_{1\le n \le T}c_{j,p}\gamma_n \e_p(a_p\lambda^{s_n}).
\end{align*}
An application of the Cauchy-Schwarz inequality gives
\begin{align*}
W_j^2\le T^3\sum_{1\le n \le T}\left|\sum_{p \in \cR_j}c_{j,p} \e_p(a_p\lambda^{s_n}) \right|^2.
\end{align*}
Since the sequence $s_n$ is increasing and bounded by $S$, we see that 
\begin{align*}
W_j^{2}&\le T^3\sum_{1\le s \le S}\left|\sum_{p \in \cR_j}c_{j,p} \e_p(a_p\lambda^{s}) \right|^2 \ll \frac{T^3}{S}\sum_{-S\le r,s\le S}\left|\sum_{p \in \cR_j}c_{j,p} \e_p(a_p\lambda^{r+s}) \right|^2,
\end{align*}
so that writing 
$$\bW_j=\sum_{-S\le r,s\le S}\left|\sum_{p \in \cR_j}c_{j,p} \e_p(a_p\lambda^{r+s}) \right|^2,$$
the above implies
\begin{equation}
\label{eq:WW'}
W_j^2\le \frac{T^3}{S}\bW_j.
\end{equation}
Considering $\bW_j$, expanding the square and interchanging summation gives
\begin{align*}
\bW_j&\le \sum_{p_1,p_2\in \cR_j}\left|\sum_{-S\le r,s\le S}\e_{p_1p_2}((a_{p_1}p_2-a_{p_2}p_1)\lambda^{r+s}) \right| \\
&\le S^2 |\cR_j|+\sum_{\substack{p_1,p_2\in \cR_j \\ p_1\neq p_2}}\sum_{-S\le r \le S}\left|\sum_{-S\le s \le S}\e_{p_1p_2}(a_{p_1p_2}\lambda^{r+s}) \right|,
\end{align*}
for some integers $a_{p_1p_2}$ with $\gcd(a_{p_1p_2},p_1p_2)=1$. By~\eqref{eq:Vrdef} and~\eqref{eq:Vrin}
$$
|\cR_j|=\sum_{X_j<r\le 2X_j}|\cV_j(r)|\ll X,
$$
and hence
\begin{equation}
\label{eq:WV}
\bW_j\ll S^2X +\sum_{\substack{p_1,p_2\in \cR_j \\ p_1 \neq p_2}}Z(p_1,p_2).\end{equation}
where 
$$
Z(p_1,p_2)= \sum_{-S\le r \le S}\left|\sum_{-S\le s \le S}\e_{p_1p_2}(a_{p_1p_2}\lambda^{r+s}) \right|.
$$

Considering $Z(p_1,p_2)$, by the Cauchy-Schwarz inequality, we have 
\begin{align*}
Z(p_1,p_2)^2&\ll S\sum_{-S\le r \le S}\left|\sum_{-S\le s \le S}\e_{p_1p_2}(a_{p_1p_2}\lambda^{r}\lambda^{s}) \right|^2 \\
&\ll S\left(1+\frac{S}{\text{ord}_{p_1p_2}(\lambda)} \right) \sum_{u \bmod{p_1p_2}}\left|\sum_{-S\le s \le S}\e_{p_1p_2}(a_{p_1p_2}u\lambda^{s}) \right|^2.
\end{align*}
Now, since 
\begin{align*}
\sum_{u \bmod{p_1p_2}}&\left|\sum_{-S\le s \le S}\e_{p_1p_2}(a_{p_1p_2}u\lambda^{s}) \right|^2\\
&=\sum_{-S\le s_1,s_2 \le S}\sum_{u \bmod{p_1p_2}}\e_{p_1p_2}(a_{p_1p_2}u(\lambda^{s_1}-\lambda^{s_2})) \\
&\ll p_1p_2S\left(1+\frac{S}{\text{ord}_{p_1p_1}(\lambda)} \right),
\end{align*}
we see that 
\begin{align*}
Z(p_1,p_2)^2&\ll p_1p_2S^{2}\left(1+\frac{S}{\text{ord}_{p_1p_1}(\lambda)} \right)^2\le X^2S^{2}\left(1+\frac{S}{\text{ord}_{p_1p_2}(\lambda)} \right)^2.
\end{align*}
Since $t_{p_1},t_{p_2}\ge X_j$, we have
\begin{align*}
\text{ord}_{p_1p_2}(\lambda)=\text{lcm}(t_{p_1},t_{p_2})=\frac{t_{p_1}t_{p_2}}{\gcd(t_{p_1},t_{p_2})}\ge \frac{X_j^2}{\gcd(p_1-1,p_2-1)},
\end{align*}
which implies
\begin{align*}
Z(p_1,p_2)^2\le X^2S^{2}\left(1+\frac{\gcd(p_1-1,p_2-1)S}{X_j^2} \right)^2,
\end{align*}
which after substituting the above in~\eqref{eq:WV} gives
\begin{align*}
\bW_j\ll S^2X+XS\sum_{\substack{p_1,p_2\in \cR_j \\ p_1 \neq p_2}}1+\frac{XS^2}{X_j^2}\sum_{\substack{p_1,p_2\in \cR_j \\ p_1 \neq p_2}}\gcd(p_1-1,p_2-1).
\end{align*}
We have 
\begin{align*}
\sum_{\substack{p_1,p_2\in \cR_j \\ p_1 \neq p_2}}1\le |\cR_j|^2\ll X^2,
\end{align*}
and 
\begin{align*}
\sum_{\substack{p_1,p_2\in \cR_j \\ p_1 \neq p_2}}\gcd(p_1-1,p_2-1)&\ll \sum_{1\le x_1<x_2 \le X}\gcd(x_1,x_2) \\ &=\sum_{1\le d \le X}d\sum_{\substack{1\le x_1<x_2\le X/d \\ (x_1,x_2)=1}}1 \ll X^{2+o(1)},
\end{align*}
so that 
$$
\bW_j\ll S^2X+SX^3+\frac{S^2X^{3+o(1)}}{X_j^2}.
$$
Combining the above with~\eqref{eq:WW'} gives
$$
W_j^2\ll SXT^3+X^3T^3+\frac{SX^{3+o(1)}T^3}{X_j^2},
$$
which simplifies to 
$$
W_j\le X^{3/2}T^{3/2}\left(1+\frac{S^{1/2}}{X_j} \right)X^{o(1)},
$$
since we may assume $S\le X^{2+o(1)}$.
By~\eqref{eq:W12def} we have
\begin{equation}
\label{eq:W2bound}
W^{\ge}\ll  X^{3/2}T^{3/2}\left(1+\frac{S^{1/2}}{Y} \right)X^{o(1)},
\end{equation}

\subsection{Concluding the proof}

Substituing~\eqref{eq:W1bound} and~\eqref{eq:W2bound} in~\eqref{eq:Wj12}
we derive
\begin{align*}
V_\lambda&\(\Gamma, \cS;T,X\) \\ & \quad\le X^{1/2}T^2+\left(Y+X^{1-\varrho}\right)T^{2+o(1)}+X^{3/2}T^{3/2}\left(1+\frac{S^{1/2}}{Y} \right)X^{o(1)}.
\end{align*}

Recalling the choice of $Y$ in~\eqref{eq:Ydef} the above simplifies to
\begin{align*}
V_\lambda\(\Gamma, \cS;T,X\)  \le \left(X^{1/2}T^2+X^{1-\varrho/4}T^2+X^{3/2}T^{3/2}+X^{3/4}T^{7/8}S^{1/4}\right)X^{o(1)},
\end{align*}
and the result follows with $\rho = \varrho/4$ (as clearly $\varrho \le 1$ and thus $\rho < 1/2$)

\section{Proof of Theorem~\ref{thm:main3}}

First we note that without loss of generality we may assume the binary digits of $a$ are zeros on all
positions $j \in \cS$.

For a prime $p$, let $N_p(a; \cS)$ be the number of $z \in \cN(a; \cS)$ with 
$p\mid z$. One can easily see that $N_p(a; \cS)$ is the number of solutions to the
congruence
$$
a + \sum_{n =1}^T d_n 2^{s_n} \equiv 0 \bmod p, \qquad d_n \in \{0,1\}, \ n =1, \ldots, T.
$$
We now proceed similarly to the proof of~\cite[Theorem~18.1]{KS}. Using the orthogonality of exponential functions, we write 
\begin{align*}
N_p(a; \cS) & =  \frac{1}{p}\sum_{b= 0}^{p-1} 
\sum_{(d_1, \ldots , d_T) \in \{0, 1\}^{T}}
\ep \(b\(\sum^{T}_{n=1}d_n 2^{s_n} + a \)\) \\
& = 2^{T}p^{-1}+
\frac{1}{p}\sum_{b= 1}^{p-1}
\sum_{(d_1, \ldots , d_T) \in \{0, 1\}^{T}}
\ep \(b\(\sum^{T}_{n=1}d_j 2^{s_n} + a \)\) \\
&  = 2^Tp^{-1}+
\frac{1}{p}\sum_{b= 1}^{p-1} \ep(ab)\prod_{n=1}^{T}
\(1 + \ep \(b  2^{s_n}\)\).
\end{align*}
Therefore,
\begin{equation}
\label{NnpP}
\left| N_{n,p}(a) - 2^Tp^{-1} \right|  \le Q_p,
\end{equation}
where 
$$
Q_p=  \max_{b= 1, \ldots, p-1}\left|  \prod_{n=1}^{T}
\(1 + \ep \(b  2^{s_n}\)\) \right|  .
$$
Using~\cite[Equation~(18.2)]{KS} we write 
\begin{equation}
\label{PSN}
Q_p \le \exp \( O(M_p \log  (T/M_p+1) )  \),
\end{equation}
where 
$$
M_p = \max_{\gcd(b,p)=1}\left|\sum_{n\le T}\ep(a\lambda^{s_n})\right|. 
$$
Now, 
by  Theorem~\ref{thm:main2}
 if we fix  some $\varepsilon_0> 0$, then there is  some  $\kappa>0$ such that  if 
 $$
 X = T^{1/(1+\varepsilon_0)}, \quad \Delta = X^{1/2} \mand S \le X^{2- \varepsilon_0},
 $$
then we have
$$
\sum_{\substack{p \in \cE_\Delta(X)}}M_p^{2} \le T^2 X^{1-\kappa}.
$$
Since $S  \le T^{2 - \varepsilon}$, to satisfy the above conditions, it is enough to define 
$\varepsilon_0$ by the equation 
$$
\frac{2- \varepsilon_0}{1+\varepsilon_0} = 2- \varepsilon
$$
or, more explicitely, 
$$
 \varepsilon_0  = \frac{\varepsilon}{3-\varepsilon} .
$$
Combining this with~\eqref{eq: E X}, we see that for all but $o(X/\log X)$ primes $p\le X$ 
we have $M_p \le T X^{-\kappa/3}$.  For each of these primes $p$, a combination 
of~\eqref{NnpP} and~\eqref{PSN} implies that $N_p(a; \cS)>0$ (provided that $p$ is 
large enough), which concludes the proof.

\section{Possible improvements}
\label{sec:Improve}

We note that one can get an improvement of Theorem~\ref{thm:main1} 
by using a combination of different admissible pairs depending on the range 
of $d$ in our treatement of the sum~\eqref{eq:Sigma2ij} in and thus making the choice of $\alpha$ and 
$\beta$ in~\eqref{eq:Sigma2ij-ab} dependent on $i$ and $j$. 

In particular, one can use the admissible pairs~\eqref{eq:HBK1}, \eqref{eq:HBK2}, \eqref{eq:Shk}  and~\eqref{eq:BGK} as well the admissible pairs of Konyagin~\cite{Kon} and Shteinikov~\cite{Sht} for small values of $d$ in~\eqref{eq:Sigma2ij}.

\section*{Acknowledgement}

This work was  partially supported   by  the NSF Grant DMS~1600154 (for M.-C.~C.) and
by ARC Grant~DP170100786 (for I.~S.).

\end{document}